\newtheorem{lemma}{\bf Lemma}
\newtheorem{proposition}{\bf Proposition}
\newtheorem{remark}{Remark}
\newtheorem{theorem}{\bf Theorem}
\theoremstyle{definition}
\newtheorem{example}{\bf Example}
\numberwithin{equation}{section}
\title[Gradient shrinking Ricci solitons]{Four-dimensional complete gradient \\shrinking Ricci solitons}
\author{Huai-Dong Cao}
\author{Ernani Ribeiro Jr}
\author{Detang Zhou}
\address[H.-D. Cao]{Department of Mathematics, Lehigh University, Bethlehem, PA 18015, USA} \email{huc2@lehigh.edu} 
\thanks{H.-D. Cao was partially supported by a Simons Foundation Collaboration Grant (\# 586694 HC)}
\address[E. Ribeiro Jr]{Departamento  de Matem\'atica, Universidade Federal do Cear\'a - UFC, Campus do Pici, 60455-760, Fortaleza - CE, Brazil.}
\email{ernani@mat.ufc.br}
\thanks{E. Ribeiro was partially supported by CNPq/Brazil (\# 305410/2018-0 \& 160002/2019-2) and CAPES/ Brazil - Finance Code 001}
\address[D. Zhou]{Instituto de Matem\'atica e Estat\'istica, Universidade Federal Fluminense - UFF, 24020-140, Niter\'oi - RJ, Brazil}
\email{zhou@impa.br}
\thanks{D. Zhou was partially supported by CNPq/Brazil, (\# 305364/2019-7) and FAPERJ/ Brazil (\# E-26/202.591/2019). }
\keywords{gradient Ricci soliton; four-manifolds; Ricci flow; curvature estimates} \subjclass[2020]{Primary 53C25, 53C20, 53E20}
\date{July 2, 2020}
\begin{document}

\begin{abstract}
In this article, we study four-dimensional complete gradient shrinking Ricci solitons. We prove that a four-dimensional complete gradient shrin\-king Ricci soliton satisfying a pointwise condition involving either the self-dual or anti-self-dual part of the Weyl tensor is either Einstein, or a finite quotient of either the Gaussian shrinking soliton $\Bbb{R}^4,$ or  $\Bbb{S}^{3}\times\Bbb{R}$, or  $\Bbb{S}^{2}\times\Bbb{R}^{2}.$ In addition, we provide some curvature es\-ti\-mates for four-dimensional complete gradient Ricci solitons assuming that its scalar curvature is suitable bounded by the potential function. 
\end{abstract}

\maketitle

\section{Introduction}\label{int}

A complete Riemannian metric $g$ on a smooth manifold $M^n$ is called a {\it gradient shrinking Ricci soliton} if there exists a smooth potential function $f$ on $M^n$ such that the Ricci tensor $Ric$ of the metric $g$ satisfies the equation
\begin{equation}
\label{maineq}
Ric+ Hess\,f=\frac{1}{2} g.
\end{equation} Here, $Hess\,f$ denotes the Hessian of $f.$ Gradient Ricci solitons are important objects in understanding Hamilton's Ricci flow \cite{Hamilton1}. They often arise as Type I singularity models \cite{Hamilton2}, thus playing a crucial role in the singularity analysis of the Ricci flow \cite{Perelman2}. Indeed, it was conjectured by Hamilton and confirmed by Enders, M\"uller and Topping \cite{Topping} (see also \cite{Naber, Sesum} for related works) that, under certain mild restriction, the blow-ups around a Type I singularity point of the Ricci flow converge to (nontrivial) gradient shrinking Ricci solitons.  We refer the reader to the survey \cite{caoALM11} and references therein for an overview on the subject. 

In dimension $n=2$, Hamilton \cite{Hamilton2} showed that any two-dimensional gradient shrinking Ricci soliton is either isometric to the plane $\Bbb{R}^2$ or a quotient of the sphere $\Bbb{S}^2.$ For $n=3$, by the works of Ivey \cite{Ivey}, Perelman \cite{Perelman2}, Naber \cite{Naber}, Ni-Wallach \cite{Ni}, and Cao-Chen-Zhu \cite{CaoA},  it is known that any complete three-dimensional gradient shrinking Ricci soliton is a finite quotient of either the round sphere $\Bbb{S}^3,$ or the Gaussian shrinking soliton $\Bbb{R}^3,$ or the round cylinder $\Bbb{S}^{2}\times\Bbb{R}.$  In recent years, a lot of progress has been made in understanding gradient shrinking Ricci solitons in higher dimensions; see, e.g., \cite{CaoChen,ELM, Ni, Chen,Chow,zhang, Naber, CZ, PW2, CWZ, FLGR, MW,KW, MS, MW2} and the references therein.

Before discussing the four-dimensional case, we emphasize that dimension four displays fascinating and peculiar features, for this reason much attention has been given to this dimension.  Many peculiar features on oriented 4-manifolds directly rely on the fact that the bundle of $2$-forms $\Lambda^2$ can be invariantly decomposed as a direct sum 
\begin{equation}
\label{piu}
\Lambda^2=\Lambda^{+}\oplus\Lambda^{-},
\end{equation}  
where $\Lambda^{\pm}$ is the $(\pm 1)$-eigenspace of the Hodge star operator $\ast,$ respectively. In this scenario, we now briefly recall a few relevant results on the classification of four-dimensional complete gradient shrinking Ricci solitons under vanishing conditions involving the Weyl tensor $W.$ For instance, it is known by the works of \cite{ELM, Ni, zhang, PW2, CWZ, MS} that complete locally conformally flat (i.e. $W=0$) four-dimensional gradient shrinking Ricci solitons are isometric to finite quotients of either $\Bbb{S}^4$, $\Bbb{R}^4$, or $\Bbb{S}^{3}\times\Bbb{R}.$ Later, Chen and Wang \cite{CW} proved that half-conformally flat (i.e. $W^{+}=0$ or $W^{-}=0$) four-dimensional gradient shrinking Ricci solitons are isometric to finite quotients of $\Bbb{S}^4$, $\Bbb{CP}^2,$ $\Bbb{R}^4$, or $\Bbb{S}^{3}\times\Bbb{R}.$ In \cite{CaoChen}, Cao and Chen were able to prove that complete Bach-flat four-dimensional gradient shrinking Ricci solitons are either Einstein or finite quotients of $\Bbb{R}^4$ or $\Bbb{S}^{3}\times\Bbb{R}.$ We point out that either locally conformally flat metrics, Einstein metrics, or half-conformally flat metrics on four-dimensional manifolds are Bach-flat. On the other hand, Fern\'{a}ndez-L\'opes and Garc\'ia-R\'io \cite{FLGR} together with Munteanu-Sesum \cite{MS} showed that complete gradient shrinking Ricci solitons with harmonic Weyl tensor (i.e. $\delta W=0$) are rigid, namely, they are either Einstein, or a finite quotient of $\Bbb{R}^4$, $\Bbb{S}^{2}\times\Bbb{R}^{2}$ or $\Bbb{S}^{3}\times\Bbb{R}.$ A more recent result, due to Wu, Wu and Wylie \cite{Wu},  states that the same conclusion holds under the weaker condition of harmonic self-dual Weyl tensor (i.e. $\delta W^{+}=0$). 
In \cite{catinoAdv}, Catino showed that every four-dimen\-sio\-nal compact shrinking Ricci soliton satisfying the integral pinching condition $$\int_{M}|W|^{2} dV_{g}+\frac{5}{4}\int_{M}|\mathring{Ric}|^{2}dV_{g}\leq \frac{1}{48}\int_{M}R^{2} dV_{g}$$ is isometric to a quotient of $\Bbb{S}^{4}.$ Here, $\mathring{Ric}$ stands for the traceless Ricci tensor. Other interesting results on four-dimen\-sional compact shrinking Ricci soliton satisfying suitable pinching conditions were obtained, for instance, by Cao and Hung \cite{CH}. In addition, Catino \cite{Catino} proved that every four-dimensional complete gradient shrinking Ricci soliton with nonnegative Ricci curvature and satisfying 
\begin{equation}
\label{lk10}
|W|R\leq \sqrt{3}\left(|\mathring{Ric}|-\frac{1}{2\sqrt{3}}R\right)^2
\end{equation} is a finite quotient of $\Bbb{R}^4,$ $\Bbb{S}^{3}\times\Bbb{R},$ or $\Bbb{S}^{4}.$ Moreover, as it was observed in \cite{Wu}, the nonnegative Ricci curvature $Ric\geq 0$ assumption in Catino's result can be removed. Later, by a different approach, Zhang \cite{Zhang2} showed that a four-dimensional complete gradient shrinking Ricci soliton with bounded and nonnegative Ricci curvature $0\leq Ric\leq C$ and satisfying 
\begin{equation}
\label{lk2}
|W|\leq \gamma \Big| |\mathring{Ric}|-\frac{1}{2\sqrt{3}}R\Big|,
\end{equation} for some constant $\gamma<1+\sqrt{3},$ is either flat or has $2$-positive Ricci curvature. 
Notice, however, that the pinching conditions (\ref{lk10}) and (\ref{lk2}) do not recover the complete gradient product Ricci soliton $\Bbb{S}^{2}\times\Bbb{R}^{2},$ which can also be considered as a gradient shrinking K\"ahler-Ricci soliton.

Now, we are ready to state our first result.

\begin{theorem}
\label{thmA}
Let $(M^4,\,g,\,f)$ be a four-dimensional complete  gradient shrinking Ricci soliton such that either $$|W^{+}|^{2} -\sqrt{6}|W^{+}|^{3}\geq \frac{1}{2}\langle (\mathring{Ric}\odot \mathring{Ric})^{+},W^{+}\rangle, $$ or
$$|W^{-}|^{2} -\sqrt{6}|W^{-}|^{3}\geq \frac{1}{2}\langle (\mathring{Ric}\odot \mathring{Ric})^{-},W^{-}\rangle.$$ 
Then $(M^4,\,g,\,f)$ is either 

\medskip
\begin{enumerate}
\item[(i)] Einstein, or
\smallskip

\item[(ii)] a finite quotient of either the Gaussian shrinking soliton $\Bbb{R}^4,$ or  $\Bbb{S}^{3}\times\Bbb{R}$, or  $\Bbb{S}^{2}\times\Bbb{R}^{2}.$
\end{enumerate}

\end{theorem}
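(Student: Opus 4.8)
The strategy is to work with the standard Weitzenböck-type formula for the (anti-)self-dual Weyl tensor on a gradient shrinking Ricci soliton and to exploit the weighted maximum principle associated with the drift Laplacian $\Delta_f = \Delta - \langle \nabla f, \nabla \cdot\rangle$. On a four-dimensional gradient shrinking soliton one has an evolution equation of the form
\begin{equation*}
\Delta_f |W^{\pm}|^2 = |\nabla W^{\pm}|^2 + |W^{\pm}|^2 - 36\,\det W^{\pm} - 2\langle (\mathring{Ric}\odot\mathring{Ric})^{\pm}, W^{\pm}\rangle
\end{equation*}
(up to normalization constants fixed by our conventions), where the cubic term $\det W^{\pm}$ is controlled pointwise by $|W^{\pm}|^3$ via the sharp algebraic inequality $6\sqrt{6}\,|\det W^{\pm}| \le |W^{\pm}|^3$, valid for trace-free symmetric endomorphisms of the $3$-dimensional space $\Lambda^{\pm}$. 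Combining this with the refined Kato inequality $|\nabla W^{\pm}|^2 \ge \tfrac{5}{3}|\nabla |W^{\pm}||^2$ (valid when $\delta W^{\pm}$ is suitably coupled to the soliton structure, as in the works of Wu–Wu–Wylie and Catino cited above), one obtains a differential inequality for $u = |W^{\pm}|$ of the form $\Delta_f u \ge \tfrac{2}{3}\frac{|\nabla u|^2}{u} + u\big(\tfrac{1}{2} - \sqrt{6}\,u\big) + \big(\tfrac{1}{2} - \tfrac{1}{u}\langle(\mathring{Ric}\odot\mathring{Ric})^{\pm},W^{\pm}\rangle/(2u)\big)$—more precisely, a clean inequality once the hypothesis of the theorem is inserted. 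The point of the peculiar-looking hypothesis $|W^{\pm}|^2 - \sqrt{6}|W^{\pm}|^3 \ge \tfrac{1}{2}\langle(\mathring{Ric}\odot\mathring{Ric})^{\pm},W^{\pm}\rangle$ is precisely that it makes the zeroth-order terms in this inequality nonnegative, yielding $\Delta_f u \ge \tfrac{2}{3}|\nabla u|^2/u \ge 0$ wherever $u>0$, i.e. $u$ is $f$-subharmonic.

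Next I would invoke the integral/volume properties of shrinking solitons: by Cao–Zhou and Munteanu–Wang, $(M^4,g,f)$ has at most Euclidean volume growth and the weighted measure $e^{-f}dV_g$ is finite, while $f$ grows quadratically, $\tfrac{1}{4}(r-c)^2 \le f \le \tfrac{1}{4}(r+c)^2$. On such a space a nonnegative $f$-subharmonic function that lies in $L^2(e^{-f}dV_g)$ — which $|W^{\pm}|$ does, since $|Rm|\in L^2(e^{-f}dV_g)$ by Munteanu–Sesum — must be constant; this is the weighted analogue of the classical $L^p$-Liouville theorem (one integrates $u\,\Delta_f u\, e^{-f}$ by parts against suitable cutoffs and uses the volume growth to kill the boundary term). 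Hence $|W^{\pm}|$ is constant and $\nabla W^{\pm} \equiv 0$; feeding this back into the Weitzenböck formula forces the zeroth-order terms to vanish identically, so either $W^{\pm}\equiv 0$ or the pointwise inequalities above are all equalities with $|W^{\pm}|$ a positive constant. In the first case we are in the half-conformally-flat situation and Chen–Wang's theorem (together with the fact that $\mathbb{CP}^2$ is Einstein) gives the stated conclusion. In the equality case, the rigidity in the refined Kato inequality together with $\nabla W^{\pm}=0$ should force $W^{\pm}$ to be a parallel section of a very restricted type, and a direct analysis — using that $W^{\pm}$ is parallel and the soliton equation — should show the universal cover splits isometrically, landing again in case (ii).

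The main obstacle I anticipate is twofold. First, making the Weitzenböck computation genuinely usable requires establishing the refined Kato inequality $|\nabla W^{\pm}|^2 \ge \tfrac{5}{3}|\nabla|W^{\pm}||^2$ in the soliton setting; this is subtle because the second Bianchi identity gives $\delta W^{\pm}$ in terms of the Cotton/Schouten tensor, which on a soliton is expressed through $\nabla f$ and $\mathring{Ric}$, so one must verify that the required algebraic constraint relating $\nabla W^{\pm}$ to its trace part actually holds, or else argue directly with a weaker but still positive constant. Second, the $L^2$-Liouville step requires care at infinity: one needs the integrability $|W^{\pm}|^2 \in L^1(e^{-f}dV_g)$ and a Caccioppoli-type estimate to control $|\nabla u|$, which in turn leans on the precise quadratic growth of $f$ and Euclidean volume bound — this is where the cited estimates of Cao–Zhou, Munteanu–Wang, and Munteanu–Sesum are indispensable. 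Once past these, the rigidity analysis in the equality case is the remaining technical matter, but it should follow the now-standard pattern for shrinkers with parallel pieces of the curvature tensor, as in Cao–Chen and Fernández-López–García-Río.
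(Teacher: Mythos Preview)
Your overall strategy---Weitzenb\"ock formula, algebraic bound on $\det W^{\pm}$, hypothesis kills the zeroth-order term, weighted Liouville via cutoffs, then case analysis on $|W^{\pm}|\equiv 0$ versus $|W^{\pm}|$ a positive constant---is exactly the paper's approach. A few points where the paper's execution differs from what you anticipate:

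\textbf{No refined Kato is needed.} The correct Weitzenb\"ock formula on a four-dimensional shrinker is
\[
\Delta_f |W^{\pm}|^2 = 2|\nabla W^{\pm}|^2 + 2|W^{\pm}|^2 - 36\det W^{\pm} - \langle(\mathring{Ric}\odot\mathring{Ric})^{\pm},W^{\pm}\rangle,
\]
with the factor $2$ in front of both $|\nabla W^{\pm}|^2$ and $|W^{\pm}|^2$. Writing $\Delta_f|W^{\pm}|^2 = 2|W^{\pm}|\Delta_f|W^{\pm}| + 2|\nabla|W^{\pm}||^2$ and using only the \emph{ordinary} Kato inequality $|\nabla W^{\pm}|^2\ge|\nabla|W^{\pm}||^2$, together with the sharp bound $36\det W^{\pm}\le 2\sqrt{6}\,|W^{\pm}|^3$, already gives $|W^{\pm}|\Delta_f|W^{\pm}|\ge |W^{\pm}|^2-\sqrt{6}|W^{\pm}|^3-\tfrac{1}{2}\langle(\mathring{Ric}\odot\mathring{Ric})^{\pm},W^{\pm}\rangle\ge 0$. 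Your first ``main obstacle'' evaporates once the constants are right.

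\textbf{$L^2_f$-integrability.} Rather than citing a general $|Rm|\in L^2(e^{-f}dV_g)$ result, the paper extracts $|W^{+}|\in L^2_f$ directly from the hypothesis: it implies $\sqrt{6}|W^{+}|^2\le |W^{+}|+\tfrac{\sqrt{6}}{2}|Ric|^2$, and then Munteanu--Sesum's $\int_M|Ric|^2 e^{-f}\,dV_g<\infty$ together with finiteness of the weighted volume does the job.

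\textbf{The equality case.} This is where your sketch is vaguest and where the paper's route is cleanest. Once $|W^{+}|$ is a nonzero constant, the Weitzenb\"ock identity forces $\nabla W^{+}=0$ and equality in the determinant estimate, so $W^{+}$ has exactly two distinct eigenvalues. No splitting argument is needed: $\nabla W^{+}=0$ trivially gives $\delta W^{+}=0$, and one simply invokes the Wu--Wu--Wylie classification of shrinkers with half-harmonic Weyl tensor (together with Derdzi\'nski's observation that parallel $W^{+}$ with two eigenvalues forces the metric to be K\"ahler). That yields Einstein or a finite quotient of $\mathbb{S}^2\times\mathbb{R}^2$ directly.
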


\begin{remark} One new feature is that no Ricci curvature bound is assumed. Moreover, equality in Theorem \ref{thmA} for $W^{+}$ holds for the product Ricci soliton $\Bbb{S}^{2}\times\Bbb{R}^{2},$ regarded as a gradient K\"ahler-Ricci soliton with respect to the natural orientation induced from the complex structure on  $\Bbb{CP}^{1}\times\Bbb{C}$. Therefore, to some extent, Theorem \ref{thmA} can be considered as a gap theorem for $\Bbb{S}^{2}\times\Bbb{R}^{2}$. 
\end{remark}

\begin{remark}
\label{remarkE}
It is also interesting to estimate the value of $\langle (\mathring{Ric}\odot \mathring{Ric})^{+},W^{+}\rangle$ in order to obtain a condition depending on the norm of $\mathring{Ric}$ instead of a Kulkarni-Nomizu product $\odot$ in the right hand side of our assumption in Theorem \ref{thmA}. In fact, we can prove the inequality
\begin{equation}
\label{EqHu}
\langle (\mathring{Ric}\odot \mathring{Ric})^{+},W^{+}\rangle \leq \sqrt{6}|\mathring{Ric}|^{2}|W^{+}|; 
\end{equation} see Proposition \ref{propH} in Section \ref{Background} for more details. However, notice that the equality case of (\ref{EqHu}) does not hold for   $\Bbb{S}^{2}\times\Bbb{R}^{2}.$ Indeed, as shown in Example \ref{ex1a} in Section \ref{Background}, we have $$\langle (\mathring{Ric}\odot \mathring{Ric})^{+},W^{+}\rangle =\frac{\sqrt{6}}{3} |\mathring{Ric}|^{2}|W^{+}|$$ for $\Bbb{S}^{2}\times\Bbb{R}^{2}$. 
\end{remark}

Curvature estimates play a crucial role in the study of complete noncompact gradient shrinking Ricci soliton. In this context, it is well known that the curvature of three-dimensional gradient shrinking Ricci solitons must be nonnegative \cite{Ivey, Hamilton2, Chen}, before the complete classification was known. However, for dimension four or higher, this is no longer true according to the example of Feldman, Ilmanen and Knopf \cite{FIK}. Recently, there has been much progress concerning the curvature estimate of four-dimensional gradient Ricci solitons, led by the work of Munteanu and Wang \cite{MW} in which they proved that a complete four-dimensional gradient shrinking Ricci soliton must have bounded Riemann curvature tensor $Rm$, provided that its scalar curvature $R$ is bounded above by a constant. Similar curvature estimates were also obtained for complete four-dimensional gradient steady Ricci solitons by Cao and Cui \cite{CC}; see also Chan \cite{Chan}.

In our next result\footnote{This result, namely Theorem 2, was initially needed in the proof of Theorem 1. Later we were able to prove Theorem 1 without having to use it.}, inspired by the work in \cite{MW}, we shall prove curvature estimates for four-dimensional complete gradient shrinking Ricci solitons by assuming that its scalar curvature is suitable bounded by the potential function (hence allowing controlled quadratic growth). More precisely, we have established the following result which extends the curvature estimates by Munteanu and Wang.

\begin{theorem}
\label{thmRIC}
Let $(M^{4},\,g,\,f)$ be a four-dimensional complete noncompact gradient shrinking Ricci soliton satisfying $$R\leq A+\varepsilon f,$$ for some  constants $A>0$ and $0\leq\varepsilon<1.$ Then there exist positive constants $c_{0},$ $c_{1}$ and $c_{2}$ (independent of $f$) such that the following curvature estimates hold on $M$:

\medskip
\begin{enumerate}
\item $|Ric|\leq c_{0}+ (c_{1}\varepsilon)\, f;$
\vspace{0.3cm}
\item $|Rm|\leq c_{0}+ (c_{2}\varepsilon)\, f^{2}.$
\end{enumerate} 
\end{theorem}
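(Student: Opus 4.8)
The plan is to follow the strategy of Munteanu--Wang \cite{MW}, but carefully track how the quadratic (rather than merely bounded) growth of $R$ enters the estimates, and absorb the extra terms into the $\varepsilon$-dependent constants. Recall the standard soliton identities: $R + \Delta f = 2R$ actually $R + |\nabla f|^2 = f$ (after normalizing the additive constant in $f$), $\nabla R = 2\,Ric(\nabla f)$, and $\Delta_f R = R - 2|Ric|^2$, where $\Delta_f = \Delta - \langle\nabla f,\nabla\cdot\rangle$ is the drift Laplacian. From $R \le A + \varepsilon f$ and $R + |\nabla f|^2 = f$ one gets $|\nabla f|^2 \ge (1-\varepsilon) f - A$, so $f$ still grows essentially quadratically in the distance to a fixed point and $\sqrt f$ behaves like the distance function up to controlled errors; in particular the weighted volume estimates and the cutoff functions built from $f$ from \cite{MW} remain available, now with constants depending on $1-\varepsilon$.

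The first step is estimate (1), the bound on $|Ric|$. I would work with the function $u = |Ric|^2$ (or the traceless version $|\mathring{Ric}|^2$) and compute its drift Bochner formula. Using the soliton equations one has a Simons-type identity of the schematic form $\Delta_f |Ric|^2 = 2|\nabla Ric|^2 + |Ric|^2 - 4\,(\text{Rm} * Ric * Ric)$ (this is where the soliton structure is crucial — the curvature terms are controlled because on a shrinker the full curvature tensor can be re-expressed in lower-order soliton data plus the Weyl tensor, and in dimension four one can exploit that $|Rm|^2 \le C(|Ric|^2 + |W|^2)$ together with the $W$-contribution being handled as in \cite{MW}). The key point, exactly as in \cite{MW}, is a De Giorgi--Nash--Moser iteration on geodesic balls (equivalently, $f$-sublevel sets) for the drift Laplacian: the iteration produces a pointwise bound on $|Ric|$ at a point $x$ in terms of a weighted integral of $|Ric|$ over a ball around $x$, and one closes the loop because $\int |Ric|^2 e^{-f}$ and related integrals are finite and grow at most polynomially in $f$ on sublevel sets. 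The only new feature relative to \cite{MW} is that the ``$R$ bounded'' input is replaced by ``$R \le A+\varepsilon f$''; one checks that wherever they used $R \le \text{const}$ one now picks up an additional term proportional to $\varepsilon f$, which after the iteration yields $|Ric| \le c_0 + c_1\varepsilon f$ rather than $|Ric|\le\text{const}$.

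The second step is estimate (2), the bound on $|Rm|$, and this is where the main obstacle lies. In dimension four one reduces bounding $|Rm|$ to bounding the Weyl tensor $|W|$, since $|Rm|^2$ is controlled by $|W|^2 + |Ric|^2$ and step (1) already handles the $Ric$ part. For $|W|$ one runs a second Moser iteration, this time for the drift Bochner inequality satisfied by $|W|$ (or $|W^{\pm}|$), which by the second Bianchi identity on a shrinking soliton has the schematic form $\Delta_f |W| \ge -c\,|Rm|\,|W| - (\text{lower order})$; the dangerous cubic term $|W|^3$ and the cross term $|Ric||W|$ must be absorbed. Because $|Ric|$ is now allowed to grow linearly in $f$, the coefficient in the differential inequality for $|W|$ grows linearly in $f$, and the Moser iteration — carried out on a sublevel set $\{f < r\}$ on which this coefficient is bounded by a multiple of $r$ — then yields a bound for $|W|$ that is quadratic in $r$, i.e. $|W| \le c_0 + c_2\varepsilon f^2$. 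Making this rigorous requires: (a) justifying the integral/cutoff manipulations (finiteness of the relevant weighted integrals, which follows from the shrinker volume/decay estimates and step (1)); (b) tracking that every place where \cite{MW} used boundedness of $R$ or of $Ric$ now contributes a factor linear in $f$, and verifying the powers so that the final exponent on $f$ is exactly $2$ with a coefficient proportional to $\varepsilon$ (so that $\varepsilon=0$ recovers the Munteanu--Wang bounded-curvature conclusion). I expect the delicate bookkeeping in (b) — getting the right power of $f$ and the linear dependence on $\varepsilon$ — to be the technical heart of the argument; the Moser iteration machinery itself is by now standard for the drift Laplacian on shrinking solitons.
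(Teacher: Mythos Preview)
Your overall instinct---follow Munteanu--Wang and track where the bound $R\le A+\varepsilon f$ replaces $R\le\text{const}$---is exactly right, but the sketch misidentifies the analytic engine and omits the one structural lemma that makes the argument close.

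First, neither \cite{MW} nor the paper uses De Giorgi--Nash--Moser iteration here; the differential inequalities one obtains are genuinely nonlinear (quadratic in the unknown), and Moser iteration does not handle those directly. The actual technique is the pointwise maximum principle with a cutoff built from $f$: one localizes by multiplying by $\varphi^2=\eta(f/\rho^2)^2$ and applies the maximum principle to the compactly supported function, so that at the maximum the good quadratic term dominates. For part (1) the relevant auxiliary quantity is not $|Ric|^2$ but $u=|Ric|^2/R^{1/2}$; the extra factor $R^{-a}$ (here $a=\tfrac12$) produces, via $\Delta_f R=R-2|Ric|^2$, a positive term $+2a\,|Ric|^4/R^{a+1}$ that competes against the bad cubic, and it is precisely the ratio $R/|\nabla f|^2\le(\varepsilon f+A)/((1-\varepsilon)f-A)$ that makes this coefficient eventually positive outside a compact set. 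Your proposal to iterate on $|Ric|^2$ alone would not generate this good term.

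Second, and more fundamentally, you never invoke the four-dimensional pointwise estimate (Proposition~\ref{propA} in the paper, originating in \cite{MW})
\[
|Rm|\le C_0\Big(|Ric|+\frac{|\nabla Ric|}{|\nabla f|}\Big),
\]
valid wherever $|\nabla f|\ne 0$. This is the step that replaces the naive $|Rm||Ric|^2$ reaction term in $\Delta_f|Ric|^2$ by quantities involving only $|Ric|$ and $|\nabla Ric|$, the latter of which is then absorbed into the gradient term $2|\nabla Ric|^2$ via Young's inequality---at the cost of exactly the factor $R/|\nabla f|^2$ above. Without this lemma the inequality for $|Ric|^2$ is coupled to $|Rm|$ and does not close. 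Your vague appeal to ``$|Rm|^2\le C(|Ric|^2+|W|^2)$ with the $W$-contribution handled as in \cite{MW}'' is not how the argument works and would be circular.

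For part (2), the paper does not isolate $|W|$; it runs the same maximum-principle-with-cutoff argument on $\Phi=|Rm|+|Ric|^2$, using Proposition~\ref{propA} a second time to turn $\Delta_f|Rm|\ge -c|Rm|^2$ plus the $|Ric|^2$ inequality into $\Delta_f\Phi\ge \tfrac12\Phi^2-\beta(f)$ with $\beta$ polynomial in the already-established Ricci bound. The $f^2$ growth of $|Rm|$ then falls out from $\beta\sim(\varepsilon f)^4$ and $\Phi^2\gtrsim\beta$ at the maximum.
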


\begin{remark}
We note that $R \le f$ for any gradient shrinking Ricci soliton (cf. Lemma \ref{lem1}). Recently, Chow, Freedman, Shin and Zhang \cite{CFSZ} showed that any four-di\-men\-sional gradient shrinking Ricci soliton arising as a blow-up limit of a finite time singularity of the Ricci flow must have curvature tensor satisfying $|Rm|\leq c (r+1)^{2},$ i.e., $Rm$ has at most quadratic growth.  It would be interesting to see if our estimate  (2) can be improved to $|Rm|\leq c_{0}+ (c_{2}\varepsilon)\, f$ so that $Rm$ also has at most quadratic growth in our situation. 
\end{remark}

\begin{remark} 
We point out that, by using the estimate on the Riemann curvature tensor obtained in Theorem \ref{thmRIC} and essentially the same arguments as in \cite{MW}, one can further derive the estimate 
 $$|\nabla Rm|\leq c_{3}+(c_{4}\varepsilon)\,f^{4}, $$ for some positive constants $c_{3}$ and $c_{4}.$
\end{remark}

\vspace{0.30cm}
The article is organized as follows. In Section \ref{Background}, we review some classical tensors and basic facts about gradient shrinking Ricci solitons. Moreover, we provide some key lemmas that will be used in the proof of the main results. In Section \ref{rigidityR}, we prove Theorem \ref{thmA}.
In Section \ref{Scurvatureestimate}, we present the proof of the curvature estimates stated in Theorem \ref{thmRIC}.

\section{Background}
\label{Background}

In this section we review some basic facts and present some lemmas that will be useful in the proof of the main results.

We start by recalling some special tensors in the study of curvature for a Riemannian manifold $(M^n,\,g)$ of dimension $n\ge 4.$  The first one is the Weyl tensor $W$ which is defined by the following decomposition formula
\begin{eqnarray}
\label{weyl}
R_{ijkl}&=&W_{ijkl}+\frac{1}{n-2}\big(R_{ik}g_{jl}+R_{jl}g_{ik}-R_{il}g_{jk}-R_{jk}g_{il}\big) \nonumber\\
 &&-\frac{R}{(n-1)(n-2)}\big(g_{jl}g_{ik}-g_{il}g_{jk}\big),
\end{eqnarray} where $R_{ijkl}$ stands for the Riemann curvature tensor. The second one is the Cotton tensor $C$ given by
\begin{equation}
\label{cotton} \displaystyle{C_{ijk}=\nabla_{i}R_{jk}-\nabla_{j}R_{ik}-\frac{1}{2(n-1)}\big(\nabla_{i}R
g_{jk}-\nabla_{j}R g_{ik}).}
\end{equation} Next, the Schouten tensor $A$ is defined by
\begin{equation}
\label{schouten} A_{ij}=R_{ij}-\frac{R}{2(n-1)}g_{ij}.
\end{equation} Combining Eqs. (\ref{weyl}) and (\ref{schouten}) we obtain
\begin{equation}
\label{WS} R_{ijkl}=\frac{1}{n-2}(A\odot g)_{ijkl}+W_{ijkl},
\end{equation} where $\odot$ is the Kulkarni-Nomizu product.

Recall that the Kulkarni-Nomizu product  $\odot,$ which takes two symmetric $(0,2)$-tensors and yields a $(0,4)$-tensor with the same algebraic symmetries as the curvature tensor, is defined by
\begin{eqnarray}
\label{eq76}
(\alpha \odot \beta)_{ijkl}=\alpha_{ik}\beta_{jl}+\alpha_{jl}\beta_{ik}-\alpha_{il}\beta_{jk}-\alpha_{jk}\beta_{il}.
\end{eqnarray}

As mentioned in the introduction, four-dimensional manifolds display various special features. For instance, the bundle of $2$-forms $\Lambda^2$ on a four-dimensional oriented Riemannian manifold can be invariantly decomposed as a direct sum, 
\begin{equation}
\label{lk1}
\Lambda^2=\Lambda^{+}\oplus\Lambda^{-}.
\end{equation} This decomposition is conformally invariant. In particular, let $\{e_{i}\}_{i=1}^{4}$ be an oriented orthonormal basis of the tangent space at any fixed point $p\in M^4,$ then it gives rise to bases of $\Lambda^{\pm}$ given by
\begin{equation}
\label{lkm1}
\Big\{e^{1}\wedge e^{2}\pm e^{3}\wedge e^{4},\,e^{1}\wedge e^{3}\pm e^{4}\wedge e^{2},\,e^{1}\wedge e^{4}\pm e^{2}\wedge e^{3}\Big\},
\end{equation} where each bi-vector has length $\sqrt{2}.$ Moreover, the decomposition (\ref{lk1}) allows us to conclude that the Weyl tensor $W$ is an endomorphism of $\Lambda^2=\Lambda^{+} \oplus \Lambda^{-} $ such that 
\begin{equation}
\label{ewq}
W = W^+\oplus W^-,
\end{equation} where $W^\pm:\Lambda^\pm M\longrightarrow\Lambda^\pm M$ are called the {\it self-dual} part and {\it anti-self-dual} part of the Weyl tensor $W$, respectively. Thereby, we may fix a point $p\in M^4$ and diagonalize $W^\pm$ such that $w_i^\pm,$ $1\le i \le 3,$ are their respective eigenvalues. Also, these eigenvalues satisfy
\begin{equation}
\label{eigenvalues} w_1^{\pm}\leq w_2^{\pm}\leq w_3^{\pm}\,\,\,\,\hbox{and}\,\,\,\,w_1^{\pm}+w_2^{\pm}+w_3^{\pm}
= 0.
\end{equation}

We now establish the following algebraic proposition.

\begin{proposition}
\label{proAlg}
Let $(M^{4},\,g)$ be a four-dimensional Riemannian manifold. Then we have:
\begin{enumerate}
\item $\frac{3}{2}(w_{3}^{+})^{2}\leq |W^{+}|^{2},$  and 
\item $\det W^{+}\leq \frac{1}{6}(w_{3}^{+})|W^{+}|^{2}\leq \frac{\sqrt{6}}{18}|W^{+}|^{3}.$ 
\end{enumerate} 
Moreover, equality holds in either case if and only if $w_{1}^{+}=w_{2}^{+}.$
\end{proposition}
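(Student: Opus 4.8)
The plan is to translate everything into elementary inequalities for the three eigenvalues $w_1^+\le w_2^+\le w_3^+$ of the self-dual Weyl operator $W^+$ acting on the $3$-dimensional space $\Lambda^+$. We use two ingredients: the trace-free condition $w_1^++w_2^++w_3^+=0$ from (\ref{eigenvalues}), and, with the normalization of $\Lambda^{\pm}$ fixed in (\ref{lkm1}), the standard identities $|W^+|^2=\sum_{i=1}^3 (w_i^+)^2$ and $\det W^+=w_1^+\,w_2^+\,w_3^+$.

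For item (1): since $w_1^++w_2^+=-w_3^+$, the elementary inequality $(a+b)^2\le 2(a^2+b^2)$ gives $(w_3^+)^2\le 2\big((w_1^+)^2+(w_2^+)^2\big)$. Adding $(w_3^+)^2$ to both sides and invoking the identity for $|W^+|^2$ yields $\tfrac32 (w_3^+)^2\le |W^+|^2$, with equality precisely when $w_1^+=w_2^+$.

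For item (2): first observe that $w_3^+\ge 0$, being the largest of three reals summing to zero; if $w_3^+=0$ then $w_1^+=w_2^+=w_3^+=0$, so $W^+=0$ and everything holds trivially (with equality). Assume $w_3^+>0$. Substituting $w_3^+=-(w_1^++w_2^+)$ into $|W^+|^2$ and then dividing the target inequality $\det W^+\le \tfrac16 w_3^+|W^+|^2$ by $w_3^+>0$, one checks that it reduces to $2w_1^+w_2^+\le (w_1^+)^2+(w_2^+)^2$, i.e. $(w_1^+-w_2^+)^2\ge 0$; this always holds, with equality iff $w_1^+=w_2^+$ (and when $w_1^+w_2^+<0$ it is strict and automatic, so no case is lost). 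Finally, feeding item (1) in the form $w_3^+\le \sqrt{2/3}\,|W^+|=\tfrac{\sqrt 6}{3}|W^+|$ into $\tfrac16 w_3^+|W^+|^2$ gives the second inequality $\tfrac16 w_3^+|W^+|^2\le \tfrac{\sqrt 6}{18}|W^+|^3$, once more with equality iff $w_1^+=w_2^+$.

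The argument is routine, so I do not anticipate a genuine obstacle; the only points requiring a little care are (a) being consistent about the normalization convention that makes $|W^+|^2=\sum_i (w_i^+)^2$ hold, and (b) isolating the degenerate case $w_3^+=0$ (equivalently $W^+=0$) before dividing by $w_3^+$, so that the manipulation is legitimate and the equality discussion is complete.
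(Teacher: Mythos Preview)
Your proof is correct and follows essentially the same approach as the paper: both reduce everything to the trace-free condition and the inequality $(w_1^+-w_2^+)^2\ge 0$. The only cosmetic difference is that the paper factors $w_3^+|W^+|^2-6\det W^+=2w_3^+(w_1^+-w_2^+)^2$ directly, so it never divides by $w_3^+$ and hence does not need to isolate the degenerate case $W^+=0$ separately.
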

\begin{proof}

Firstly, it follows from (\ref{eigenvalues}) that
\begin{eqnarray}
|W^{+}|^{2}&=&(w_{1}^{+})^{2}+(w_{2}^{+})^{2}+(w_{3}^{+})^{2}\nonumber\\&=& \frac{1}{2}\left(w_{1}^{+}+w_{2}^{+}\right)^{2}+\frac{1}{2}(w_{1}^{+}-w_{2}^{+})^{2}+(w_{3}^{+})^{2}\nonumber\\&\geq &\frac{1}{2}(w_{3}^{+})^{2}+(w_{3}^{+})^{2}=\frac{3}{2}(w_{3}^{+})^{2}.
\end{eqnarray} Moreover, the equality holds if and only if $w_{1}^{+}=w_{2}^{+},$ as asserted.   

Proceeding, we compute
\begin{eqnarray}
\label{eq12ws}
(w_{3}^{+})|W^{+}|^{2}-6\det W^{+}& =& 2w_{3}^{+}\left[(w_{3}^{+})^{2}+(w_{2}^{+})^{2}+ (w_{3}^{+})(w_{2}^{+})\right] -6 w_{1}^{+} w_{2}^{+}w_{3}^{+}\nonumber\\&=& 2(w_{3}^{+})^{3}-8w_{1}^{+} w_{2}^{+}w_{3}^{+}\nonumber\\&=& 2(w_{3}^{+})(w_{1}^{+}-w_{2}^{+})^{2}\ge 0,
\end{eqnarray} where we have used repeatedly that $w_1^{+}+w_2^{+}+w_3^{+}=0.$ The second inequality now follows from the above and inequality (1). Moreover, the equality case is straightforward. This finishes the proof of Proposition \ref{proAlg}. 

\end{proof}

As is well known, the curvature operator $Rm$ of a four-dimensional Riemannian manifold $M^4$ admits the following decomposition 

\begin{equation}\label{eq212}
Rm=
\left(
  \begin{array}{c|c}
    \\
W^{+} +\frac{R}{12}Id & \mathring{Ric} \\ [0.4cm]\hline\\

    \mathring{Ric}^{\star} & W^{-}+\frac{R}{12}Id  \\[0.4cm]
  \end{array}
\right),
\end{equation} where $\mathring{Ric}:\Lambda^{-}\to \Lambda^{+}$ stands for the traceless part of the Ricci curvature of $M^4.$

In the sequel, we present an estimate for $(\mathring{Ric}\odot \mathring{Ric})^{+}$ on oriented four-dimensional  Riemannian manifolds.

\begin{proposition}
\label{propH}
Let $(M^{4},\,g)$ be an oriented four-dimensional  Riemannian manifold. Then we have:
\begin{enumerate}
\item $|(\mathring{Ric}\odot \mathring{Ric})|^2 \leq 6|\mathring{Ric}|^{4}.$ Moreover,  equality holds if and only if $4|\mathring{Ric}^{2}|^{2}=|\mathring{Ric}|^{4},$ where $(\mathring{Ric}^{2})_{ik}=(\mathring{Ric})_{ip} (\mathring{Ric})_{kp}.$
\item $|(\mathring{Ric}\odot \mathring{Ric})^{+}|^2\leq 6|\mathring{Ric}|^{4}.$ Furthermore, equality holds if and only if $|(\mathring{Ric}\odot \mathring{Ric})^{-}|=0.$
\end{enumerate}
\end{proposition}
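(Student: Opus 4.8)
The plan is to reduce both inequalities to a pointwise computation at a fixed point $p\in M^4$ in terms of the eigenvalues of $\mathring{Ric}$, viewed as a symmetric trace-free endomorphism. First I would diagonalize $\mathring{Ric}$ at $p$ with eigenvalues $\lambda_1,\lambda_2,\lambda_3,\lambda_4$ satisfying $\sum_i\lambda_i=0$ and write $|\mathring{Ric}|^2=\sum_i\lambda_i^2$. Using the definition (\ref{eq76}) of the Kulkarni-Nomizu product with $\alpha=\beta=\mathring{Ric}$, one has $(\mathring{Ric}\odot\mathring{Ric})_{ijkl}=2\big(\mathring{Ric}_{ik}\mathring{Ric}_{jl}-\mathring{Ric}_{il}\mathring{Ric}_{jk}\big)$, so that after a direct expansion
\begin{equation*}
|\mathring{Ric}\odot\mathring{Ric}|^2 = 8\Big(|\mathring{Ric}|^4 - |\mathring{Ric}^2|^2\Big) + \text{(cross terms)},
\end{equation*}
where $|\mathring{Ric}^2|^2=\sum_i\lambda_i^4$; carrying out the contraction carefully in an orthonormal frame gives the clean identity $|\mathring{Ric}\odot\mathring{Ric}|^2 = 8|\mathring{Ric}|^4 - 8|\mathring{Ric}^2|^2 = 8\sum_{i\neq j}\lambda_i^2\lambda_j^2$, which I would double-check by testing on a rank-one and a rank-two example. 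Part (1) then becomes the elementary inequality $8\sum_{i\neq j}\lambda_i^2\lambda_j^2\le 6\big(\sum_i\lambda_i^2\big)^2$ for trace-free $(\lambda_i)$, i.e. $4\big(\sum_i\lambda_i^2\big)^2\ge 8\sum_i\lambda_i^4$ after rearranging $\sum_{i\neq j}\lambda_i^2\lambda_j^2=(\sum_i\lambda_i^2)^2-\sum_i\lambda_i^4$; this is precisely the stated equality condition $4|\mathring{Ric}^2|^2=|\mathring{Ric}|^4$, and the remaining inequality $\sum_i\lambda_i^4\le\tfrac12(\sum_i\lambda_i^2)^2$ follows from Cauchy-Schwarz on two coordinates once the trace-free constraint is used to split the four eigenvalues into two pairs (or more directly: for any four reals one does not get this bound, so the trace-free hypothesis is essential — I would verify that $\sum\lambda_i^4\le\tfrac12(\sum\lambda_i^2)^2$ holds exactly on the hyperplane $\sum\lambda_i=0$, with a short Lagrange-multiplier or pairing argument).

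For part (2), the key point is the orthogonal decomposition $\mathring{Ric}\odot\mathring{Ric}=(\mathring{Ric}\odot\mathring{Ric})^++(\mathring{Ric}\odot\mathring{Ric})^-$ with respect to the splitting $\Lambda^2=\Lambda^+\oplus\Lambda^-$ of (\ref{lk1}) applied to the pair of $2$-form slots of the $(0,4)$-tensor — this is legitimate because $\mathring{Ric}\odot\mathring{Ric}$ has the algebraic symmetries of a curvature tensor (indeed it satisfies the first Bianchi identity, since $\mathring{Ric}$ is symmetric). By orthogonality,
\begin{equation*}
|(\mathring{Ric}\odot\mathring{Ric})^+|^2 + |(\mathring{Ric}\odot\mathring{Ric})^-|^2 = |\mathring{Ric}\odot\mathring{Ric}|^2 \le 6|\mathring{Ric}|^4,
\end{equation*}
so $|(\mathring{Ric}\odot\mathring{Ric})^+|^2\le 6|\mathring{Ric}|^4$ immediately, with equality precisely when $|(\mathring{Ric}\odot\mathring{Ric})^-|=0$, as claimed. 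Thus part (2) is essentially a formal consequence of part (1) together with the self-dual/anti-self-dual orthogonal decomposition.

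The main obstacle I anticipate is purely computational: getting the combinatorial constant right in the identity $|\mathring{Ric}\odot\mathring{Ric}|^2=8\sum_{i\neq j}\lambda_i^2\lambda_j^2$. One must be careful that $|\cdot|^2$ for $(0,4)$-tensors is $T_{ijkl}T^{ijkl}$ summed over all four indices (not over $i<j,\ k<l$), and that the cross terms $\mathring{Ric}_{ik}\mathring{Ric}_{jl}\cdot\mathring{Ric}_{il}\mathring{Ric}_{jk}$ contract to $\sum_i\lambda_i^4$ rather than to something involving products of distinct eigenvalues. I would organize this by working in the eigenbasis of $\mathring{Ric}$, where $\mathring{Ric}_{ij}=\lambda_i\delta_{ij}$, so that $(\mathring{Ric}\odot\mathring{Ric})_{ijkl}$ is nonzero only for index patterns $(i,j,i,j)$ and $(i,j,j,i)$ with $i\neq j$, each contributing $\pm2\lambda_i\lambda_j$; squaring and summing over the $4\cdot 3$ ordered pairs with the appropriate multiplicity then yields the constant $8$ cleanly. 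Once that identity and the trace-free elementary inequality $\sum_i\lambda_i^4\le\tfrac12(\sum_i\lambda_i^2)^2$ are in hand, both statements and their equality cases follow in a couple of lines.
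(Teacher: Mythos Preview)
Your overall strategy---diagonalize $\mathring{Ric}$, compute $|\mathring{Ric}\odot\mathring{Ric}|^2=8\sum_{i\neq j}\lambda_i^2\lambda_j^2$, then reduce to an elementary inequality in the eigenvalues---is exactly what the paper does, and your treatment of part (2) via the orthogonal splitting $\Lambda^2=\Lambda^+\oplus\Lambda^-$ is correct and matches the paper.

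However, there is a genuine algebraic error in your reduction for part (1). Starting from $8\sum_{i\neq j}\lambda_i^2\lambda_j^2\le 6\big(\sum_i\lambda_i^2\big)^2$ and substituting $\sum_{i\neq j}\lambda_i^2\lambda_j^2=(\sum_i\lambda_i^2)^2-\sum_i\lambda_i^4$, the correct rearrangement is
\[
2\Big(\sum_i\lambda_i^2\Big)^2 \le 8\sum_i\lambda_i^4,\qquad\text{i.e.}\qquad \sum_i\lambda_i^4\ \ge\ \tfrac14\Big(\sum_i\lambda_i^2\Big)^2,
\]
not $\sum_i\lambda_i^4\le\tfrac12(\sum_i\lambda_i^2)^2$. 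You have both the direction and the constant wrong. The inequality you propose to prove is in fact \emph{false} even under the trace-free constraint: for $(\lambda_1,\lambda_2,\lambda_3,\lambda_4)=(3,-1,-1,-1)$ one has $\sum\lambda_i^4=84$ while $\tfrac12(\sum\lambda_i^2)^2=72$. So the Lagrange-multiplier argument you plan would fail.

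The correct inequality $\sum_i\lambda_i^4\ge\tfrac14(\sum_i\lambda_i^2)^2$ is just Cauchy--Schwarz applied to $(1,1,1,1)$ and $(\lambda_1^2,\dots,\lambda_4^2)$, with equality iff all $\lambda_i^2$ are equal, i.e.\ iff $4|\mathring{Ric}^2|^2=|\mathring{Ric}|^4$. In particular the trace-free hypothesis plays no role in the inequality of part (1); the paper's proof does not invoke $\sum_i\lambda_i=0$ at all. Once you fix the sign, the argument is immediate and your claimed ``main obstacle'' disappears entirely.
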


\begin{proof} First of all, the Kulkarni-Nomizu product yields $$(\mathring{Ric}\odot \mathring{Ric})_{ijkl}=2(\mathring{R}_{ik}\mathring{R}_{jl}-\mathring{R}_{il}\mathring{R}_{jk}).$$ 
Suppose $\mathring R_{ij}=\lambda_i\delta_{ij}$, then 
for each $i\ne j$,
$$(\mathring{Ric}\odot \mathring{Ric})_{ijkl}=2\lambda_i\lambda_j(\delta_{ik}\delta_{jl}-\delta_{il}\delta_{jk}).
$$
Therefore, we obtain
\[
\begin{split}|(\mathring{Ric}\odot \mathring{Ric})|^2&=8\sum_{i\ne j}\lambda_i^2\lambda_j^2=8\sum_{i=1}^4\left(\sum_{j\ne i} \lambda_i^2\lambda_j^2\right)\\
&=8\sum_{i=1}^4\lambda_i^2\left(\sum_{j\ne i} \lambda_j^2\right)
=8\sum_{i=1}^4\lambda_i^2\left(\sum_{j= 1}^4\lambda_j^2-\lambda_i^2\right)\\
&=8\left[\left(\sum_{i= 1}^4 \lambda_i^2\right)^2-\sum_{i=1}^4\lambda_i^4\right]
\\
&\le 8\left[\left(\sum_{i= 1}^4 \lambda_i^2\right)^2-\frac14\left(\sum_{i= 1}^4 \lambda_i^2\right)^2\right]=6|\mathring{Ric}|^{4}.
\end{split}
\]
In particular, equality holds if and only if $4|\mathring{Ric}^{2}|^{2}=|\mathring{Ric}|^{4}.$

Next, taking into account the decomposition (\ref{lk1}), one sees that $$|(\mathring{Ric}\odot \mathring{Ric})^{+}|^{2}\leq |\mathring{Ric}\odot \mathring{Ric}|^{2}\leq 6|\mathring{Ric}|^{4},$$ and consequently, 

$$|(\mathring{Ric}\odot \mathring{Ric})^{+}|\leq \sqrt{6}|\mathring{Ric}|^{2},$$ which gives the desired result.

\end{proof}

We highlight that the estimate (\ref{EqHu}) in Remark \ref{remarkE} is an immediate consequence of the Cauchy-Schwarz inequality combined with Proposition \ref{propH}.

For our purpose, as explained in the introduction, it is useful to examine the gradient shrinking Ricci soliton on $\Bbb{S}^2 \times \Bbb{R}^2.$ 

\begin{example}
\label{ex1a}
Let $\Big(\Bbb{S}^2 (\sqrt{2})\times \Bbb{R}^2,\,g,\,f\Big)$ denote the gradient shrinking Ricci soliton on the round cylinder $\Bbb{S}^2 (\sqrt{2})\times \Bbb{R}^2$, with the product metric $g$ and the potential function given by $f(x,y)=\frac{|y|^{2}}{4}+1,$ for $(x,y)\in\Bbb{S}^2 (\sqrt{2})\times \Bbb{R}^2.$ It is easy to see that $\Bbb{S}^2 (\sqrt{2})\times \Bbb{R}^2$ has constant scalar curvature $R=1$. In particular, we may choose a basis $\{ e_1, e_2, e_3, e_4\}$ such that $\{ e_1, e_2\}$ are tangent to the first factor and $\{ e_3, e_4\}$ tangent to the second factor. Therefore, under this coordinates, we have the expressions of Ricci curvature and traceless Ricci curvature given by
\begin{equation}
   Ric=
  \left[ {\begin{array}{cccc}
    \frac{1}{2} & 0 & 0 & 0 \\
   0 & \frac{1}{2} & 0 & 0 \\
   0 & 0 & 0 & 0 \\
      0 & 0 & 0 & 0 \\
  \end{array} } \right]
\end{equation} 
and 
\begin{equation}
\label{eqMR}
   \mathring{Ric}=
  \left[ {\begin{array}{cccc}
    \frac{1}{4} & 0 & 0 & 0 \\
   0 & \frac{1}{4} & 0 & 0 \\
   0 & 0 & -\frac{1}{4} & 0 \\
      0 & 0 & 0 & -\frac{1}{4} \\
  \end{array} } \right].
\end{equation} Next, by using the decomposition $\Lambda^2=\Lambda^{+}\oplus\Lambda^{-},$ one sees that
\[
Rm= \left[ {\begin{array}{cccccc}
    \frac{1}{4} & 0 & 0 & \frac{1}{4} & 0& 0  \\
   0 & 0 & 0 & 0 & 0& 0 \\
   0 & 0 & 0 & 0 & 0& 0 \\
  \frac{1}{4} & 0 & 0 & \frac{1}{4} & 0& 0 \\
  0 & 0 & 0 & 0 & 0& 0 \\
   0 & 0 & 0 & 0 & 0& 0 
  \end{array} } \right].
\]
 Moreover, taking into account (\ref{ewq}) and (\ref{eq212}), we infer that $W^{\pm}$ has precisely two distinct eigenvalues, i.e.,
\begin{equation*}
   W^{\pm}=
  \left[ {\begin{array}{ccc}
     \frac{1}{6} & 0 & 0  \\
   0 & - \frac{1}{12} & 0 \\
   0 & 0 & - \frac{1}{12} \\
  \end{array} } \right].
\end{equation*}

Now, we claim that $\Bbb{S}^2(\sqrt{2})  \times \Bbb{R}^2$ satisfies 
\begin{equation}
\label{EqRR}
\langle (\mathring{Ric}\odot \mathring{Ric})^{+},W^{+}\rangle=\frac{1}{24}.
\end{equation} Indeed, a direct computation using the Kulkarni-Nomizu product (\ref{lk1}) gives $$(\mathring{Ric}\odot \mathring{Ric})_{ijkl}=2(\mathring{R}_{ik}\mathring{R}_{jl}-\mathring{R}_{il}\mathring{R}_{jk}).$$  In particular, we have $|\mathring{Ric}\odot \mathring{Ric}|^2=\frac{3}{8}.$ Next, given a $2$-form $\omega,$ we have
\begin{eqnarray}
\Big((\mathring{Ric}\odot \mathring{Ric})\omega\Big)_{ij}&=& \frac{1}{2}(\mathring{Ric}\odot \mathring{Ric})_{ijkl}\omega_{kl}\nonumber\\&=&(\mathring{R}_{ik}\mathring{R}_{jl}-\mathring{R}_{il}\mathring{R}_{jk})\omega_{kl}\nonumber\\&=&\mathring{R}_{ik}\mathring{R}_{jl}\omega_{kl}-\mathring{R}_{il}\mathring{R}_{jk}\omega_{kl}.
\end{eqnarray} 
By using that $\mathring{Ric}=\lambda_{i}g_{ij}$ we achieve
\begin{eqnarray}
\Big((\mathring{Ric}\odot \mathring{Ric})\omega\Big)_{ij}&=&\lambda_{i}\delta_{ik}\lambda_{j}\delta_{jl}\omega_{kl}-\lambda_{i}\delta_{il}\lambda_{j}\delta_{jk}\omega_{kl}\nonumber\\&=& \lambda_{i}\lambda_{j}\omega_{ij}-\lambda_{i}\lambda_{j}\omega_{ji}\nonumber\\&=& 2\lambda_{i}\lambda_{j}\omega_{ij}.
\end{eqnarray} 
To proceed, we remember that the components of $e^{p}\wedge e^{q}$ are given by $\delta_{ij}^{pq}=(e^{p}\wedge e^{q})_{ij},$ where $\delta_{ij}^{pq}$ is the generalized Kronecker delta symbol, which is defined to be $+1$ if $(p,q)=(i,j),$ $-1$ if $(p,q)=(j,i),$ and $0$ otherwise. Therefore, taking into account the basis (\ref{lkm1}), for $\omega_{1}^{+}=e^{1}\wedge e^{2}+ e^{3}\wedge e^{4},$ we deduce that
\begin{eqnarray*}
(\mathring{Ric}\odot \mathring{Ric})(\omega_{1}^{+})&=& \frac{1}{2}(\mathring{Ric}\odot \mathring{Ric})_{ijkl}(\omega_{1}^{+})_{kl}\nonumber\\&=& 2\lambda_{i}\lambda_{j}(\delta_{12}^{ij}+\delta_{34}^{ij})\nonumber\\&=&2\lambda_{1}\lambda_{2}\delta_{12}^{ij}+2\lambda_{3}\lambda_{4}\delta_{34}^{ij}.
\end{eqnarray*} 
Then, it follows from (\ref{eqMR}) that
\begin{eqnarray}
(\mathring{Ric}\odot \mathring{Ric})(\omega_{1}^{+})&=& 2\frac{1}{16}\delta_{12}^{ij}+2\frac{1}{16}\delta_{34}^{ij}\nonumber\\&=&\frac{1}{8}\omega_{1}^{+}.
\end{eqnarray} Similarly, for $\omega_{2}^{+}=e^{1}\wedge e^{3}+ e^{4}\wedge e^{2}$ and $\omega_{3}^{+}=e^{1}\wedge e^{4}+ e^{2}\wedge e^{3},$ one concludes that
$$(\mathring{Ric}\odot \mathring{Ric})(\omega_{2}^{+})=-\frac{1}{8}\omega_{2}^{+}$$ and $$(\mathring{Ric}\odot \mathring{Ric})(\omega_{3}^{+})=-\frac{1}{8}\omega_{3}^{+}.$$ Recalling that $(\mathring{Ric}\odot \mathring{Ric})^{\pm}(\omega)=(\mathring{Ric}\odot \mathring{Ric})(\omega^{\pm}),$ we obtain
\begin{equation}
(\mathring{Ric}\odot \mathring{Ric})^{+}=
  \left[ {\begin{array}{ccc}
    \frac{1}{8} & 0 & 0 \\
   0 & -\frac{1}{8} & 0 \\
   0 & 0 & -\frac{1}{8} \\
  \end{array} } \right].
\end{equation} Consequently, $$\langle (\mathring{Ric}\odot \mathring{Ric})^{+},W^{+}\rangle=\frac{1}{48}+\frac{1}{96}+\frac{1}{96}=\frac{1}{24},$$ which proves our claim.  

\end{example}

In the sequel, we recall some important features of gradient shrinking Ricci solitons (cf. \cite{Hamilton2}).

\begin{lemma}
\label{lem1}
Let $\big(M^4,\,g,\,f\big)$ be a four-dimensional gradient shrinking Ricci soliton. Then we have:
\begin{enumerate}
\item $R+\Delta f=2.$
\item $\frac{1}{2}\nabla R=Ric(\nabla f).$
\item $\Delta_{f} R= R-2|Ric|^{2}.$
\item $R+|\nabla f|^{2}= f$ (after normalizing).
\item $\Delta_{f} R_{ij}=R_{ij}-2R_{ikjl}R_{kl}.$
\item $\Delta_{f} Rm =Rm+ Rm\ast Rm.$
\item $\nabla_{l}R_{ijkl}=\nabla_{j}R_{ik}-\nabla_{i}R_{jk}=R_{ijkl}f_{l}.$
\end{enumerate} Here, $\Delta_f\cdot:=\Delta\cdot-\nabla_{\nabla f}\cdot$ stands for the drifted Laplacian. 
\end{lemma}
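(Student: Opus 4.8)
The plan is to derive all seven identities from the fundamental equation \eqref{maineq}, together with the first and second Bianchi identities and the Ricci commutation formula, proceeding in an order in which each identity feeds into the next. Throughout I would use the reformulation $R_{ij}=\tfrac12 g_{ij}-\nabla_i\nabla_j f$, the contracted second Bianchi identity $\nabla^j R_{ij}=\tfrac12\nabla_i R$, and the connectedness of $M$.

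First I would record the two identities that require only a single differentiation. Taking the trace of \eqref{maineq} and using $\operatorname{tr}_g g=n=4$ gives $R+\Delta f=2$, which is~(1). For~(7), the equality $\nabla_l R_{ijkl}=\nabla_j R_{ik}-\nabla_i R_{jk}$ is precisely the second Bianchi identity contracted once; for the remaining equality, differentiating \eqref{maineq} gives $\nabla_i R_{jk}=-\nabla_i\nabla_j\nabla_k f$, so that $\nabla_j R_{ik}-\nabla_i R_{jk}=(\nabla_i\nabla_j-\nabla_j\nabla_i)\nabla_k f=R_{ijkl}\nabla_l f$ by the Ricci identity applied to the one-form $df$.

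Next I would handle~(2),~(4) and~(3). Taking $\nabla^j$ of \eqref{maineq} and using $\nabla^j R_{ij}=\tfrac12\nabla_i R$ together with the commutation formula $\nabla^j\nabla_i\nabla_j f=\nabla_i\Delta f+R_{ij}\nabla^j f$ and identity~(1) (whence $\nabla_i\Delta f=-\nabla_i R$), one obtains $\tfrac12\nabla_i R-\nabla_i R+R_{ij}\nabla^j f=0$, which is~(2). For~(4) I would differentiate $R+|\nabla f|^2-f$: using \eqref{maineq} and then~(2), $\nabla_i(R+|\nabla f|^2-f)=\nabla_i R+2\nabla^j f\,(\tfrac12 g_{ij}-R_{ij})-\nabla_i f=\nabla_i R-2R_{ij}\nabla^j f=0$, so $R+|\nabla f|^2-f$ is constant on $M$ and one normalizes $f$ by an additive constant so that it vanishes. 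For~(3) I would take the divergence of~(2): $\tfrac12\Delta R=(\nabla^i R_{ij})\nabla^j f+R_{ij}\nabla^i\nabla^j f=\tfrac12\langle\nabla R,\nabla f\rangle+\tfrac12 R-|Ric|^2$, which rearranges to $\Delta_f R=R-2|Ric|^2$.

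The remaining identities~(5) and~(6) are the heart of the matter and, I expect, the main obstacle, since they require careful bookkeeping of commutator and Bianchi terms rather than any genuinely new idea. For~(5) I would start from $R_{ij}=\tfrac12 g_{ij}-\nabla_i\nabla_j f$, so that $\Delta_f R_{ij}=-\Delta_f(\nabla_i\nabla_j f)$, and apply the Bochner-type formula for the Hessian of $f$, repeatedly invoking~(7) to convert covariant derivatives of curvature contracted against $\nabla f$ into curvature contractions, and the Ricci identity to interchange covariant derivatives; after collecting terms and using~(2) the first-order contributions cancel, leaving $\Delta_f R_{ij}=R_{ij}-2R_{ikjl}R_{kl}$. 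Identity~(6) follows in the same spirit from the self-similarity of the soliton — equivalently, it is Hamilton's evolution equation $\partial_t Rm=\Delta Rm+Rm\ast Rm$ restricted to the self-similar solution $g(t)=(1-t)\varphi_t^\ast g$ associated with \eqref{maineq} — with $Rm\ast Rm$ the quadratic curvature term appearing there; these computations are classical (cf. \cite{Hamilton2}).
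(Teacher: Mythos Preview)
The paper does not actually prove this lemma: it is stated as a list of ``important features of gradient shrinking Ricci solitons'' with a parenthetical reference to Hamilton \cite{Hamilton2}, and no argument is given. Your proposal therefore cannot be compared to a proof in the paper, because there is none; what you have written is a correct, self-contained derivation of these classical identities, proceeding in the standard order (trace, contracted Bianchi plus Ricci commutation, Hamilton's first-integral, drifted Laplacian of $R$, then the Bochner/commutator computations for $Ric$ and $Rm$), and this is exactly the route one finds in the references the paper cites. In that sense your proposal supplies strictly more than the paper does, and the approach is the expected one.
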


In \cite{Chen}, Chen showed that every complete gradient shrinking Ricci soliton has nonnegative scalar curvature  $R\ge 0.$ Concerning the potential function $f,$ Cao and Zhou \cite{CZ} have proved that 
\begin{equation}
\label{eqfbeh}
\frac{1}{4}\Big(r(x)-c\Big)^{2}\leq f(x)\leq \frac{1}{4}\Big(r(x)+c\Big)^{2},
\end{equation} for all $r(x)\geq r_{0}.$ Moreover, they showed that every complete noncompact gradient shrinking Ricci soliton has at most Euclidean volume growth (see \cite[Theorem 1.2]{CZ}). These asymptotic estimates are optimal in the sense that they are achieved by the Gaussian shrinking soliton.

In \cite{derd1}, Derdzi\'nski showed that every oriented four-dimensional Einstein manifold $(M^{4},\,g)$ satisfies the Weitzenb\"ock formula 
$$\Delta |W^{\pm}|^{2}=2|\nabla W^{\pm}|^{2}+R|W^{\pm}|^{2}-36 \det W^{\pm}.$$ The Weitzenb\"ock formula is a powerful ingredient in the theory of canonical metrics on four-dimensional manifolds. It may be used to obtain classification results as well as rule out some potential new examples.

Now we recall a useful Weitzenb\"ock type formula for gradient Ricci solitons \cite{CH, Wu1}. This formula will play a key role in the proof of Theorem \ref{thmA}.

\begin{proposition}
\label{propB}
Let $(M^4,\,g,\,f)$ be a four-dimensional gradient shrinking Ricci soliton. Then we have:
$$\Delta_{f} |W^{\pm}|^{2}=2|\nabla W^{\pm}|^{2}+2|W^{\pm}|^{2}-36 \det W^{\pm}-\langle (\mathring{Ric}\odot \mathring{Ric})^{\pm},W^{\pm}\rangle,$$
where $\odot$ stands for the Kulkarni-Nomizu product and $\Delta_f\cdot=\Delta\cdot-\nabla_{\nabla f}\cdot$ is the drifted Laplacian. 
\end{proposition}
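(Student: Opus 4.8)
The plan is to prove first a pointwise reaction-diffusion identity for the tensor $W^{\pm}$ itself --- an equation of the form $\Delta_{f}W^{\pm}=W^{\pm}+Q^{\pm}$, with $Q^{\pm}$ an explicit quadratic expression in the curvature --- and then to contract it with $W^{\pm}$. The starting observation is that the Hodge star $\ast$ is parallel for the Levi-Civita connection, so the splitting $\Lambda^{2}=\Lambda^{+}\oplus\Lambda^{-}$ and the induced splittings of $\operatorname{End}(\Lambda^{2})$ are parallel; hence the orthogonal projections commute with $\nabla$ and with $\Delta_{f}$. In particular $\nabla W^{\pm}=(\nabla W)^{\pm}$, $\Delta_{f}(W^{\pm})=(\Delta_{f}W)^{\pm}$, and
\[
\Delta_{f}|W^{\pm}|^{2}=2\langle \Delta_{f}W^{\pm},\,W^{\pm}\rangle+2|\nabla W^{\pm}|^{2}.
\]
So it suffices to establish $\langle \Delta_{f}W^{\pm},\,W^{\pm}\rangle=|W^{\pm}|^{2}-18\det W^{\pm}-\tfrac12\langle(\mathring{Ric}\odot\mathring{Ric})^{\pm},W^{\pm}\rangle$, and by symmetry it is enough to treat $W^{+}$.

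For the reaction-diffusion identity I would start from the soliton formula $\Delta_{f}Rm=Rm+Rm\ast Rm$ (Lemma~\ref{lem1}(6)) and project it onto the $\Lambda^{+}\to\Lambda^{+}$ block in the curvature-operator decomposition \eqref{eq212}, whose diagonal block is $W^{+}+\tfrac{R}{12}\operatorname{Id}$ and whose off-diagonal block is $\mathring{Ric}\colon\Lambda^{-}\to\Lambda^{+}$. Using $\Delta_{f}R=R-2|Ric|^{2}$ (Lemma~\ref{lem1}(3)) together with $|Ric|^{2}=|\mathring{Ric}|^{2}+\tfrac{R^{2}}{4}$ to cancel the contribution of the $\tfrac{R}{12}\operatorname{Id}$ summand, and then taking trace-free parts, one is left with $\Delta_{f}W^{+}=W^{+}+Q^{+}$, where $Q^{+}$ is the trace-free part of the $\Lambda^{+}$-block of $Rm\ast Rm$. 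The decisive structural point is that, because $\mathfrak{so}(4)=\mathfrak{so}(3)_{+}\oplus\mathfrak{so}(3)_{-}$ splits into two commuting summands, this $\Lambda^{+}$-block is the sum of a term that is purely quadratic in $W^{+}$ and the endomorphism $\mathring{Ric}\circ\mathring{Ric}^{\star}\colon\Lambda^{+}\to\Lambda^{+}$, which is purely quadratic in $\mathring{Ric}$ --- there is no $W^{+}$--$\mathring{Ric}$ cross term. Contracting $Q^{+}$ with the trace-free endomorphism $W^{+}$, the two pieces are handled algebraically: diagonalizing $W^{+}$ with eigenvalues $w_{i}^{+}$ and using $w_{1}^{+}+w_{2}^{+}+w_{3}^{+}=0$ from \eqref{eigenvalues} turns the $W^{+}$-quadratic part into a multiple of $\langle(W^{+})^{2},W^{+}\rangle=\sum_{i}(w_{i}^{+})^{3}=3w_{1}^{+}w_{2}^{+}w_{3}^{+}=3\det W^{+}$, yielding the $\det W^{+}$ term; while $\langle\mathring{Ric}\circ\mathring{Ric}^{\star},W^{+}\rangle$ is matched with $\tfrac12\langle(\mathring{Ric}\odot\mathring{Ric})^{+},W^{+}\rangle$ by writing $(\mathring{Ric}\odot\mathring{Ric})^{+}$ as an endomorphism of $\Lambda^{+}$ in the basis \eqref{lkm1} and comparing it with the composition of the off-diagonal blocks --- exactly the computation carried out explicitly for $\Bbb{S}^{2}\times\Bbb{R}^{2}$ in Example~\ref{ex1a}.

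The step I expect to be the main obstacle is pinning down the three universal constants: the $36$ in front of $\det W^{\pm}$, the $1$ in front of $\langle(\mathring{Ric}\odot\mathring{Ric})^{\pm},W^{\pm}\rangle$, and the $2$ in front of $|W^{\pm}|^{2}$. The notorious sources of confusion in dimension four are the normalization $|e^{p}\wedge e^{q}|=\sqrt{2}$ of the basis bivectors in \eqref{lkm1} (which rescales the structure constants and hence the $\#$-part of the reaction term), the precise form of $Rm\ast Rm$ in the operator picture, and the precise normalization of $(\mathring{Ric}\odot\mathring{Ric})^{+}$ as an endomorphism of $\Lambda^{+}$. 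A good consistency check is that the case $\mathring{Ric}=0$ must reduce the identity to Derdzi\'nski's Einstein Weitzenb\"ock formula $\Delta|W^{\pm}|^{2}=2|\nabla W^{\pm}|^{2}+R|W^{\pm}|^{2}-36\det W^{\pm}$ with $R$ replaced by its constant value $2$, since an Einstein four-manifold with $Ric=\tfrac12 g$ is a shrinking soliton normalized as in \eqref{maineq}; one may further test the whole formula against the parallel examples $\Bbb{S}^{2}\times\Bbb{R}^{2}$ and $\Bbb{CP}^{2}$, where $\Delta_{f}|W^{\pm}|^{2}=0$. An alternative that sidesteps some of the operator-level bookkeeping is to run the argument through Derdzi\'nski's \emph{general} (non-Einstein) Weitzenb\"ock formula for $W^{\pm}$, in which the divergence $\operatorname{div}W$ enters and is rewritten on a gradient soliton via the identity $C_{ijk}=-W_{ijkl}\nabla_{l}f$ (a consequence of Lemma~\ref{lem1}(7) and \eqref{weyl}--\eqref{schouten}); the drift term $-\nabla_{\nabla f}$ in $\Delta_{f}$ is then precisely what absorbs the resulting $W^{\pm}(\nabla f)$ contributions. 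Once the constants are settled, assembling the linear term with the two quadratic pieces gives the stated formula, and the anti-self-dual case follows verbatim with $\Lambda^{+}$ replaced by $\Lambda^{-}$.
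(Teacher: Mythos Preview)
The paper does not actually prove Proposition~\ref{propB}; it is stated as a known result and cited to Cao--Tran \cite{CH} and Wu \cite{Wu1}. Your outlined strategy---projecting the soliton reaction--diffusion equation $\Delta_{f}Rm=Rm+Rm\ast Rm$ onto the $\Lambda^{+}\!\to\Lambda^{+}$ block of the curvature-operator decomposition \eqref{eq212} and then identifying the quadratic reaction term as a $(W^{+})^{2}$-piece plus the composition $\mathring{Ric}\circ\mathring{Ric}^{\star}$---is essentially the approach one finds in those references, and your structural observations (parallelism of $\ast$, absence of a $W^{+}$--$\mathring{Ric}$ cross term coming from the $\mathfrak{so}(4)=\mathfrak{so}(3)_{+}\oplus\mathfrak{so}(3)_{-}$ splitting, and the trace-free identity $\sum_{i}(w_{i}^{+})^{3}=3\det W^{+}$) are all correct. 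The alternative route you mention, through Derdzi\'nski's general Weitzenb\"ock formula for $W^{\pm}$ together with the soliton relation between the Cotton tensor and $W(\cdot,\cdot,\cdot,\nabla f)$, is in fact closer to how \cite{Wu1} organizes the computation; either way the drift term in $\Delta_{f}$ absorbs the $\nabla f$-contributions exactly as you describe. Your caution about the three universal constants is warranted---the $\sqrt{2}$ normalization of the basis \eqref{lkm1} and the precise meaning of $Rm\ast Rm$ are the usual pitfalls---but the consistency checks you propose (reducing to Derdzi\'nski's Einstein formula when $\mathring{Ric}=0$, and testing against $\Bbb{S}^{2}\times\Bbb{R}^{2}$ where $\Delta_{f}|W^{\pm}|^{2}=0$ and all ingredients are computed in Example~\ref{ex1a}) are sufficient to pin them down.
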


\section{Proof of Theorem \ref{thmA}}
\label{rigidityR}

In this section, we will present the proof of Theorem \ref{thmA}. For convenience, we restate it here.

\begin{theorem}
\label{thmA1}
Let $(M^4,\,g,\,f)$ be a four-dimensional complete  gradient shrinking Ricci soliton such that either $$|W^{+}|^{2} -\sqrt{6}|W^{+}|^{3}\geq \frac{1}{2}\langle (\mathring{Ric}\odot \mathring{Ric})^{+},W^{+}\rangle, $$ or
$$|W^{-}|^{2} -\sqrt{6}|W^{-}|^{3}\geq \frac{1}{2}\langle (\mathring{Ric}\odot \mathring{Ric})^{-},W^{-}\rangle.$$ 
 Then $(M^4,\,g,\,f)$ is either 

\smallskip
\begin{enumerate}
\item[(i)] Einstein, or
\smallskip
\item[(ii)] a finite quotient of either the Gaussian shrinking soliton $\Bbb{R}^4,$ or  $\Bbb{S}^{3}\times\Bbb{R}$, or  $\Bbb{S}^{2}\times\Bbb{R}^{2}.$
\end{enumerate}

\end{theorem}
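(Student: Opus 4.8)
The plan is to exploit the Weitzenböck-type formula from Proposition \ref{propB} together with the algebraic estimates from Proposition \ref{proAlg}. Assume, without loss of generality, that the hypothesis holds for $W^{+}$. Starting from
$$\Delta_{f} |W^{+}|^{2}=2|\nabla W^{+}|^{2}+2|W^{+}|^{2}-36 \det W^{+}-\langle (\mathring{Ric}\odot \mathring{Ric})^{+},W^{+}\rangle,$$
I would first bound the $\det W^{+}$ term from above using part (2) of Proposition \ref{proAlg}, namely $36\det W^{+}\le 6\sqrt{6}\,|W^{+}|^{3}$. This yields
$$\Delta_{f}|W^{+}|^{2}\ \geq\ 2|\nabla W^{+}|^{2}+2|W^{+}|^{2}-6\sqrt{6}\,|W^{+}|^{3}-\langle (\mathring{Ric}\odot \mathring{Ric})^{+},W^{+}\rangle.$$
Now the pointwise hypothesis $|W^{+}|^{2}-\sqrt{6}|W^{+}|^{3}\ge \tfrac12\langle (\mathring{Ric}\odot \mathring{Ric})^{+},W^{+}\rangle$ is precisely engineered so that $2|W^{+}|^{2}-6\sqrt{6}|W^{+}|^{3}-\langle(\mathring{Ric}\odot\mathring{Ric})^{+},W^{+}\rangle\ge -4\sqrt 6|W^{+}|^{3}+2\bigl(|W^{+}|^{2}-\sqrt6|W^{+}|^{3}-\tfrac12\langle\cdot\rangle\bigr)\ge -4\sqrt 6|W^{+}|^{3}$; more to the point, after dividing by $2$ and writing $u=|W^{+}|$, I expect to arrive at a differential inequality of the shape $\tfrac12\Delta_f u^{2}\ge |\nabla W^{+}|^{2}-\text{(something)}\,u^{3}$ and then to combine it with Kato's inequality $|\nabla W^{+}|^{2}\ge |\nabla u|^{2}$ (or the refined Kato inequality for Weyl curvature in dimension four) to get $u\Delta_f u\ge -c\,u^{3}$, i.e. $\Delta_f u\ge -c\,u^{2}$ wherever $u>0$, with the constant vanishing exactly when the hypothesis is saturated.

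The second main step is an integral/maximum-principle argument to force $u=|W^{+}|\equiv 0$. Here I would invoke the asymptotic control on $f$ from \eqref{eqfbeh} of Cao–Zhou together with the at-most-Euclidean volume growth, which makes $(M^4,g,e^{-f}dV_g)$ behave like a weighted manifold with good cutoff functions. The standard route (as in Munteanu–Sesum, Cao–Chen, Wu–Wu–Wylie) is to multiply the inequality for $u$ by a suitable test function times $e^{-f}$, integrate by parts, and use that $\int_M |W^{+}|^{2}e^{-f}\,dV_g<\infty$ and $\int_M |W^{+}|^{3}e^{-f}\,dV_g<\infty$ — the latter following from the curvature growth estimates available for shrinkers (or bootstrapped from Lemma \ref{lem1}(6) and \eqref{eqfbeh}) — to conclude first that $\nabla W^{+}\equiv 0$ and then, feeding this back, that $W^{+}\equiv 0$. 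Alternatively one can run the argument directly on $u^{2}=|W^{+}|^{2}$: from $\Delta_f u^2\ge 2|\nabla W^+|^2 + (\text{nonneg or controlled})$ and finiteness of $\int u^2 e^{-f}$, a weighted Liouville-type theorem gives $W^{+}$ parallel, and then the saturation case of Proposition \ref{proAlg}(2) (equality iff $w_1^{+}=w_2^{+}$) pins down the structure.

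Once $W^{+}\equiv 0$ (the half-conformally-flat case) or, more precisely, once $\nabla W^{+}\equiv 0$ and the eigenvalue degeneracy $w_1^{+}=w_2^{+}$ holds everywhere, I would cite the classification of half-conformally-flat gradient shrinking Ricci solitons due to Chen–Wang \cite{CW}: the soliton is a finite quotient of $\Bbb{S}^4$, $\Bbb{CP}^2$, $\Bbb{R}^4$, or $\Bbb{S}^3\times\Bbb{R}$. To finish, I must rule in/out $\Bbb{S}^{2}\times\Bbb{R}^{2}$: the point is that the intermediate conclusion is not literally $W^{+}=0$ but rather the weaker algebraic degeneracy coming from equality in Proposition \ref{proAlg}, and $\Bbb{S}^{2}\times\Bbb{R}^{2}$ (as computed in Example \ref{ex1a}, with $W^{\pm}=\mathrm{diag}(\tfrac16,-\tfrac1{12},-\tfrac1{12})$) is exactly the extremal case where the hypothesis holds with equality. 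So the argument splits: either the integral inequality is strict somewhere, forcing $W^{+}\equiv 0$ and hence Chen–Wang gives (i) Einstein ($\Bbb{S}^4$, $\Bbb{CP}^2$) or a quotient of $\Bbb{R}^4$ or $\Bbb{S}^3\times\Bbb{R}$; or equality holds identically, in which case $W^{+}$ is parallel with a repeated eigenvalue and one checks (via the structure of such solitons, e.g. using Lemma \ref{lem1}(6) and the parallel condition to show the metric locally splits) that the soliton is rigid and must be $\Bbb{S}^{2}\times\Bbb{R}^{2}$ up to finite quotient, which also appears on the list.

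I expect the main obstacle to be the integration-by-parts/cutoff step: making the weighted Liouville argument rigorous requires the integrability $\int_M|W^{+}|^{3}e^{-f}<\infty$, which is not immediate and must be extracted from the growth estimates for $|Rm|$ on shrinkers (the very estimates Theorem \ref{thmRIC} is designed to supply in the controlled-quadratic-growth setting, though here one only needs the classical polynomial bounds). A secondary subtlety is handling the degenerate/extremal case cleanly — separating "strict inequality somewhere $\Rightarrow W^{+}\equiv 0$" from "equality everywhere $\Rightarrow$ parallel $W^{+}$ with degenerate spectrum $\Rightarrow\Bbb{S}^2\times\Bbb{R}^2$" — so that $\Bbb{S}^{2}\times\Bbb{R}^{2}$ genuinely emerges rather than being lost, since none of the earlier pinching theorems in the literature capture it.
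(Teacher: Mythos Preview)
Your strategy is on the right track but contains an arithmetic slip that sends you down an unnecessarily difficult path. Proposition~\ref{proAlg}(2) gives $\det W^{+}\le \frac{\sqrt{6}}{18}|W^{+}|^{3}$, so $36\det W^{+}\le 2\sqrt{6}\,|W^{+}|^{3}$, not $6\sqrt{6}\,|W^{+}|^{3}$. With the correct constant, after Kato the Weitzenb\"ock formula yields
\[
|W^{+}|\,\Delta_{f}|W^{+}| \;\geq\; |W^{+}|^{2}-\sqrt{6}\,|W^{+}|^{3}-\tfrac{1}{2}\langle (\mathring{Ric}\odot \mathring{Ric})^{+},W^{+}\rangle \;\geq\; 0
\]
directly from the hypothesis. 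There is no residual $-c\,u^{3}$ term; the inequality in the theorem is calibrated precisely so that $|W^{+}|$ is $f$-subharmonic. Consequently the cutoff argument needs only $\int_{M}|W^{+}|^{2}e^{-f}\,dV_{g}<\infty$, not the $L^{3}_{f}$-bound you flag as the main obstacle.

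The paper does not obtain this $L^{2}_{f}$-integrability from curvature growth estimates (and in particular does not use Theorem~\ref{thmRIC}). Instead it extracts it from the hypothesis itself: rearranging the assumption and applying Cauchy--Schwarz together with Proposition~\ref{propH} gives $\sqrt{6}\,|W^{+}|^{2}\le |W^{+}|+\frac{\sqrt{6}}{2}|Ric|^{2}$, hence $|W^{+}|^{2}\le C+C|Ric|^{2}$ pointwise, and then one invokes Munteanu--Sesum's bound $\int_{M}|Ric|^{2}e^{-f}\,dV_{g}<\infty$. This is the step you are missing.

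Once $|W^{+}|\Delta_{f}|W^{+}|\ge 0$ and $|W^{+}|\in L^{2}_{f}$, the standard cutoff argument shows $|W^{+}|$ is \emph{constant}. If $|W^{+}|\equiv 0$ you correctly cite Chen--Wang. If $|W^{+}|$ is a nonzero constant, the equality chain forces $\nabla W^{+}=0$ and $w_{1}^{+}=w_{2}^{+}$; the paper then invokes the classification of shrinkers with $\delta W^{+}=0$ due to Wu--Wu--Wylie \cite{Wu} (noting also via Derdzi\'nski \cite{derd1} that the metric is K\"ahler), rather than attempting an ad hoc local splitting. Your dichotomy ``strict somewhere vs.\ equality everywhere'' is thus replaced by the cleaner ``$|W^{+}|=0$ vs.\ $|W^{+}|$ nonzero constant''.
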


\begin{proof}
To begin with, we invoke Proposition \ref{propB} to obtain
\begin{equation}
\label{eqCH}
\Delta_{f} |W^{\pm}|^{2}=2|\nabla W^{\pm}|^{2}+2|W^{\pm}|^{2}-36 \det W^{\pm}-\langle (\mathring{Ric}\odot \mathring{Ric})^{\pm},W^{\pm}\rangle.
\end{equation}  Since $\Delta_{f}|W^{\pm}|^{2}=2|W^{\pm}|\Delta_{f}|W^{\pm}|+2|\nabla |W^{\pm}||^{2},$ we deduce that

\begin{eqnarray*}
2|W^{\pm}|\Delta_{f}|W^{\pm}| &=& 2|\nabla W^{\pm}|^{2}-2|\nabla |W^{\pm}||^{2}+2|W^{\pm}|^{2} -36 \det W^{\pm}\nonumber\\&& -\langle (\mathring{Ric}\odot \mathring{Ric})^{\pm},W^{\pm}\rangle.
\end{eqnarray*} Now, we use the Kato inequality to infer

\begin{eqnarray}
\label{eqs2}
2|W^{\pm}|\Delta_{f}|W^{\pm}| &\geq & 2|W^{\pm}|^{2} -36 \det W^{\pm} -\langle (\mathring{Ric}\odot \mathring{Ric})^{\pm},\,W^{\pm}\rangle.
\end{eqnarray}
Thereby, by Proposition \ref{proAlg} (2) we achieve

\begin{equation}
\label{eqq1p}
|W^{\pm}|\Delta_{f}|W^{\pm}| \geq  |W^{\pm}|^{2} -\sqrt{6}|W^{\pm}|^{3} -\frac{1}{2}\langle (\mathring{Ric}\odot \mathring{Ric})^{\pm},W^{\pm}\rangle.
\end{equation}

By assumption, we have 
\begin{equation} 
\label{ekpi}
|W^{+}|^{2} -\sqrt{6}|W^{+}|^{3}\geq \frac{1}{2}\langle (\mathring{Ric}\odot \mathring{Ric})^{+},W^{+}\rangle.
\end{equation} 
In this case, it follows from (\ref{eqq1p}) that $|W^{\pm}|\Delta_{f}|W^{+}|$ does not change sign. To proceed, we first need to prove that $|W^{+}|\in L^{2}(e^{-f}dV_{g}).$ Indeed, it follows from (\ref{ekpi}) jointly with the Cauchy-Schwarz inequality and Proposition \ref{propH} (1) that

\begin{eqnarray*}
\sqrt{6}|W^{+}|^{3}&\leq & |W^{+}|^{2}-\frac{1}{2}\langle (\mathring{Ric}\odot \mathring{Ric})^{+},W^{+}\rangle\nonumber\\ &\leq & |W^{+}|^{2} +\frac{\sqrt{6}}{2}|\mathring{Ric}|^{2}|W^{+}|\nonumber\\ &\leq & |W^{+}|^{2}+\frac{\sqrt{6}}{2}|Ric|^{2}|W^{+}|,
\end{eqnarray*}
 so that

\begin{equation}
\sqrt{6}|W^{+}|^{2}\leq |W^{+}|+\frac{\sqrt{6}}{2}|Ric|^{2}\leq \frac{\sqrt{6}}{2}|W^{+}|^2+\frac1{2\sqrt{6}}+\frac{\sqrt{6}}{2}|Ric|^{2}.
\end{equation} 
Upon integrating this above expression over $M^4,$ one sees that

\begin{eqnarray}
\label{eqqw1}
\int_{M}|W^{+}|^{2}e^{-f}dV_{g} &\leq  &\frac{1}{6}\int_{M}e^{-f}dV_{g}+ \int_{M}|Ric|^{2}e^{-f}dV_{g}.
\end{eqnarray} 
Next, it is well-known (see, e.g., Corollary 1.1 in \cite{CZ}) that the weighted volume of $M^4$ is finite, i.e., $$\int_{M}e^{-f}dV_{g}<\infty.$$ Moreover, Munteanu and Sesum \cite[Theorem 1.1]{MS} proved that

$$\int_{M}|Ric|^{2}e^{-f}dV_{g}<\infty.$$ Therefore, it follows from (\ref{eqqw1}) that $|W^{+}|$ is $L^{2}_{f}$-integrable. 

From (\ref{eqq1p}) and the assumption in Theorem \ref{thmA1}, we have $$|W^{+}|\Delta_{f}|W^{+}|\geq 0.$$ Let  $\varphi:M \to \Bbb{R}$ be a smooth cut-off function such that $\varphi=1$ on $B_{p}(r)$ (a geodesic ball centered at a fixed point $p\in M$ of radius $r$), $\varphi=0$ outside of $B_{p}(2r)$ and $|\nabla \varphi|\leq \frac{c}{r},$ where $c$ is a constant. Therefore, we infer 

\begin{eqnarray}
0&\geq &-\int_{M}\varphi^{2}|W^{+}|\Delta_{f}|W^{+}| e^{-f}dV_{g}=\int_{M}\langle \nabla (\varphi^{2}|W^{+}|),\,\nabla |W^{+}|\rangle e^{-f}dV_{g}\nonumber\\&=& \int_{M}\varphi^{2}|\nabla |W^{+}||^{2}e^{-f}dV_{g}+2\int_{M}\varphi |W^{+}|\langle \nabla |W^{+}|,\nabla \varphi\rangle e^{-f}dV_{g}\nonumber\\ &=& \int_{M}|\varphi \nabla |W^{+}|+|W^{+}|\nabla \varphi|^{2} e^{-f}dV_{g}-\int_{M}|W^{+}|^{2}|\nabla \varphi|^{2}e^{-f}dV_{g},
\end{eqnarray} consequently, $$\int_{M}|\nabla (\varphi |W^{+}|)|^{2}e^{-f}dV_{g} \leq \int_{M}|W^{+}|^{2}|\nabla \varphi|^{2} e^{-f}dV_{g}.$$ Thus, one obtains that 

\begin{eqnarray}
\int_{B(r)}|\nabla |W^{+}||^{2}e^{-f}dV_{g}&\leq & \int_{M}|\nabla (\varphi |W^{+}|)|^{2}e^{-f}dV_{g}\nonumber\\&\leq & \int_{B_{p}(2r)\backslash B_{p}(r)}|W^{+}|^{2}|\nabla \varphi|^{2} e^{-f}dV_{g}\leq \frac{c^{2}}{r^{2}}\int_{M}|W^{+}|^{2} e^{-f}dV_{g}.
\end{eqnarray} Then, since $|W^{+}|$ is $L^{2}_{f}$-integrable we conclude that the right hand side tends to zero as $r\to \infty.$ In conclusion, $|W^{+}|$ must be constant on $M^4.$

We now have two cases to be analyzed, namely, $|W^{+}|=0$ and $|W^{+}|\neq 0.$
Now,  if  $|W^{+}|=0,$ then we can use Theorem 1.2 in \cite{CW} to conclude that $M^4$ is either Einstein or a finite quotient of either the Gaussian shrinking soliton $\Bbb{R}^4$ or the round cylinder $\Bbb{S}^{3}\times\Bbb{R}.$

On the other hand, if $|W^{+}|$ is a nonzero constant, then we may use (\ref{eqCH}), assumption (\ref{ekpi}) and Proposition \ref{proAlg} to infer that $\nabla W^{+}=0$ and 
\begin{eqnarray*}
0 &=& 2|W^{+}|^{2}-36 \det W^{+}-\langle (\mathring{Ric}\odot \mathring{Ric})^{+},W^{+}\rangle\\
&=& 2|W^{+}|^{2} -2\sqrt{6}|W^{+}|^{3}  - \langle (\mathring{Ric}\odot \mathring{Ric})^{+},W^{+}\rangle.
 \end{eqnarray*} 
 Therefore, since now equality holds in Proposition \ref{proAlg} (2), it follows that $W^{+}$ has precisely two distinct eigenvalues. Moreover, it is obvious that $\nabla W^{+}=0$ implies $\delta W^{+}=0$ (i.e., half harmonic Weyl
curvature). Furthermore, by Proposition 5 in \cite{derd1}, $(M^4,\, g)$ is actually K\"ahler. In any case, we are in a position to apply Theorem 1.1 in \cite{Wu} to conclude that $M^4$ is either Einstein or a finite quotient of $\Bbb{S}^{2}\times\Bbb{R}^{2}.$

Obviously, if we change $W^{+}$ by $W^{-}$ in (\ref{ekpi}) the proof of the theorem is exactly the same. So, the proof is completed.

\end{proof}

\section{Proof of Theorem \ref{thmRIC}}
\label{Scurvatureestimate}

In this section, we will investigate new curvature estimates for a four-dimensional gradient shrinking Ricci soliton assuming that its scalar curvature $R$ is suitable bounded by the potential function $f,$ namely,  
\begin{equation}
\label{eq2w}
R\leq A+ \varepsilon f
\end{equation} for some constants $A>0$ and $0\leq \varepsilon<1.$ To this end, it is fundamental to recall a relevant estimate for the curvature operator $Rm$ of gradient shrinking Ricci solitons observed by Munteanu and Wang \cite{MW} (see also \cite[Lemma 1]{Chan}).

\begin{proposition}
\label{propA}
Let $(M^{4},\,g,\,f)$ be a four-dimensional complete noncompact gradient shrinking Ricci soliton. Then there exists a universal positive constant $C_{0}$ such that 

$$|Rm|\leq C_{0}\left(|Ric|+\frac{|\nabla Ric|}{|\nabla f|}\right)$$ 
whenever $|\nabla f|\ne 0$. 
\end{proposition}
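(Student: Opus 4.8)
The plan is to estimate the curvature tensor componentwise at each point where $\nabla f\neq 0$. Fix $p\in M^4$ with $\nabla f(p)\neq 0$ and choose a positively oriented orthonormal frame $\{e_1,e_2,e_3,e_4\}$ at $p$ with $e_1=\nabla f/|\nabla f|$. I will split the components $R_{ijkl}$ in this frame into those having at least one index equal to $1$ and those with all four indices lying in $\{2,3,4\}$, and bound each group separately.

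For the first group I would use Lemma \ref{lem1}(7): contracting the last index of $Rm$ with $\nabla f$ gives $R_{ijkl}\nabla_l f=\nabla_j R_{ik}-\nabla_i R_{jk}$. At $p$ this reads $R_{ijk1}\,|\nabla f|=\nabla_j R_{ik}-\nabla_i R_{jk}$, whence $|R_{ijk1}|\le 2|\nabla Ric|/|\nabla f|$ for all $i,j,k$. Using the antisymmetry of $Rm$ in each of its two pairs of indices together with the pair-interchange symmetry, any component with at least one index equal to $1$ can be rewritten, up to sign, as a component of the form $R_{\bullet\bullet\bullet 1}$; hence every such component is bounded by $2|\nabla Ric|/|\nabla f|$.

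For the second group, I would observe that the restriction of $Rm$ to the $3$-dimensional subspace $V=\mathrm{span}\{e_2,e_3,e_4\}$ is an algebraic curvature tensor on $V$ (all curvature symmetries, including the first Bianchi identity, survive the restriction of the index range), and since the Weyl part of any algebraic curvature tensor on a $3$-dimensional inner product space vanishes, this restriction is completely determined by its own Ricci contraction $\bar R_{ab}:=\sum_{c\in\{2,3,4\}}R_{acbc}$ through the standard $3$-dimensional curvature formula; in particular $|R_{abcd}|\le C\,|\bar R|$ for $a,b,c,d\in\{2,3,4\}$ with $C$ universal. Now $\bar R_{ab}=\sum_{c=1}^4 R_{acbc}-R_{a1b1}=R_{ab}-R_{a1b1}$, and $R_{a1b1}$ was already controlled in the first step, so $|\bar R_{ab}|\le|Ric|+2|\nabla Ric|/|\nabla f|$, and the second group of components is bounded by a universal multiple of $|Ric|+|\nabla Ric|/|\nabla f|$ as well. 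Summing the squares of all (finitely many) components then yields $|Rm|\le C_0\big(|Ric|+|\nabla Ric|/|\nabla f|\big)$ at $p$, with $C_0$ universal.

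The step requiring the most care is the bookkeeping: reducing an arbitrary index-$1$ component to the form $R_{\bullet\bullet\bullet 1}$ via the curvature symmetries, and writing the $3$-dimensional curvature decomposition so that the constant is genuinely universal. Conceptually, however, the whole argument rests on the fact that $M$ has dimension four, so that the orthogonal complement of the line $\mathbb{R}\nabla f$ is $3$-dimensional and hence algebraically rigid; for $n\geq 5$ that complementary block carries a nontrivial Weyl part, which is precisely why the estimate is special to dimension four.
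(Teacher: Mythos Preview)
Your argument is correct and is precisely the approach in the references the paper cites for this proposition: the paper does not supply its own proof of Proposition~\ref{propA} but attributes it to Munteanu--Wang \cite{MW} (see also \cite[Lemma~1]{Chan}), and what you wrote is essentially their proof. The two key ingredients---the soliton identity $R_{ijkl}\nabla_l f=\nabla_j R_{ik}-\nabla_i R_{jk}$ from Lemma~\ref{lem1}(7) to control components touching the $\nabla f$ direction, and the vanishing of the Weyl part on the $3$-dimensional orthogonal complement to recover the remaining block from its Ricci contraction---are exactly the ones used there, and your closing remark about why the argument is special to dimension four is the right explanation.
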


Now, we are going to present the proof of the curvature estimates asserted in Theorem \ref{thmRIC} that will be stated again here.

\begin{theorem}
Let $(M^{4},\,g,\,f)$ be a four-dimensional complete noncompact gradient shrinking Ricci soliton satisfying $$R\leq A+\varepsilon f,$$ for some constants $A>0$ and $0\leq\varepsilon<1.$ Then there exist $c_{0},$ $c_{1}$ and $c_{2}$  such that the following curvature estimates hold on $M$:

\medskip
\begin{enumerate}
\item $|Ric|\leq c_{0}+ (c_{1}\varepsilon)\, f;$
\vspace{0.3cm}
\item $|Rm|\leq c_{0}+ (c_{2}\varepsilon)\, f^{2}.$
\end{enumerate} 
\end{theorem}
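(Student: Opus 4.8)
The strategy is to bootstrap from the scalar curvature bound to a Ricci bound, and then from the Ricci bound to a Riemann bound, using the drifted-Laplacian equations in Lemma \ref{lem1} together with the asymptotic behavior of $f$ from \eqref{eqfbeh} and the universal estimate in Proposition \ref{propA}. First I would recall the basic soliton identities: from Lemma \ref{lem1}(2) we have $\frac{1}{2}\nabla R = Ric(\nabla f)$, and from Lemma \ref{lem1}(4), $|\nabla f|^2 = f - R$, so away from the (compact, by \eqref{eqfbeh}) zero set of $\nabla f$ one controls $|\nabla f|$ from below at large distance. The hypothesis $R \le A + \varepsilon f$ with $\varepsilon < 1$ is exactly what keeps $|\nabla f|^2 = f - R \ge (1-\varepsilon) f - A$ comparable to $f$, hence comparable to $r(x)^2$; this is the quantitative lever that makes everything work.

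\textbf{Step 1: the Ricci estimate.} I would establish $|Ric| \le c_0 + c_1 \varepsilon f$ by a maximum-principle argument applied to a suitable quantity built from $|Ric|^2$ and $R$. The natural candidate is to consider $u = |Ric|^2$ (or $|\mathring{Ric}|^2$) and use Lemma \ref{lem1}(5), $\Delta_f R_{ij} = R_{ij} - 2R_{ikjl}R_{kl}$, which gives a Bochner-type inequality $\Delta_f |Ric|^2 \ge 2|\nabla Ric|^2 + 2|Ric|^2 - C|Rm||Ric|^2$; in dimension four $|Rm|$ is itself controlled by $|Ric|$ plus the Weyl part, but more efficiently one uses that $|Rm| \le C(|Ric| + \frac{|\nabla Ric|}{|\nabla f|})$ from Proposition \ref{propA}. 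The key algebraic input special to solitons is that Lemma \ref{lem1}(3), $\Delta_f R = R - 2|Ric|^2$, forces $|Ric|^2$ to be integrable against $e^{-f}$ and, combined with $R \le A+\varepsilon f$, controls its growth. I would run the argument of Munteanu--Wang \cite{MW}: comparing $|Ric|$ against a multiple of $f$ via the function $|Ric| - \sigma f$ for appropriate $\sigma = O(\varepsilon)$, showing $\Delta_f(|Ric| - \sigma f) \ge 0$ outside a compact set wherever $|Ric| - \sigma f > 0$ (using $\Delta_f f = 2 - R \le $ something controlled, and the Bochner inequality together with the smallness of $\sigma$), and invoking the maximum principle on the weighted manifold together with the fact that $f \to \infty$ properly. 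The constant $c_1 \varepsilon$ emerges because the "bad" term in the Bochner formula is $2|Ric|^2$ minus a transport term, and only the part proportional to $\varepsilon f$ survives after subtracting the scalar curvature contribution.

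\textbf{Step 2: the Riemann estimate.} Given (1), Proposition \ref{propA} reduces the problem to bounding $\frac{|\nabla Ric|}{|\nabla f|}$. Since $|\nabla f|^2 \ge (1-\varepsilon)f - A$ grows like $f$, it suffices to show $|\nabla Ric|^2 \le c_0 + c_2' \varepsilon f^3$ or so, which when divided by $|\nabla f|^2 \sim f$ and combined with $|Ric|^2 \le (c_0 + c_1\varepsilon f)^2 \sim \varepsilon^2 f^2$ yields $|Rm| \le c_0 + c_2\varepsilon f^2$. To bound $|\nabla Ric|$ I would use Lemma \ref{lem1}(6), $\Delta_f Rm = Rm + Rm \ast Rm$, differentiate to get a drifted equation for $|\nabla Rm|^2$ (or directly for $|\nabla Ric|^2$), and again apply the weighted maximum principle, now using the already-established bound $|Rm| \le $ (something like $c_0 + c_2 \varepsilon f^2$) as the coefficient — this is a standard parabolic-type bootstrapping where each derivative costs one power of $f$. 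One must be careful that the argument is not circular: the bound on $|Rm|$ used as a coefficient in the equation for $|\nabla Rm|^2$ should be the a priori bound one is trying to prove, so instead I would first prove a crude polynomial bound on $|Rm|$ (e.g. via Shi-type local estimates combined with \eqref{eqfbeh}), then feed it in to get a bound on $|\nabla Ric|$, then close the loop through Proposition \ref{propA}.

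\textbf{Main obstacle.} The hard part is Step 1 — extracting the \emph{linear}-in-$f$ bound on $|Ric|$ with a constant of size $O(\varepsilon)$, rather than merely a quadratic bound. The subtlety is that the reaction term $2|Ric|^2$ in $\Delta_f|Ric|^2$ is not sign-good, and one cannot simply absorb it; the Munteanu--Wang mechanism exploits a delicate interplay between the soliton structure (so that $\nabla R$, $\nabla f$ and $Ric$ are tied together) and the integrability $\int_M |Ric|^2 e^{-f} < \infty$, plus the precise comparison $\Delta_f f = 2-R$. Carrying this through with $R$ allowed to grow like $\varepsilon f$ rather than being bounded requires tracking how every estimate degrades linearly in $\varepsilon$ and checking that the "drift wins" comparison $\Delta_f(\text{test function}) \ge 0$ still holds at infinity; the threshold $\varepsilon < 1$ is precisely the borderline at which $|\nabla f|^2 = f - R$ remains coercive. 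Once Step 1 is in hand, Step 2 is a more routine (if lengthy) iteration, and the final assembly via Proposition \ref{propA} is immediate.
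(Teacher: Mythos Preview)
Your proposal identifies the right ingredients (the Bochner formula from Lemma~\ref{lem1}(5), Proposition~\ref{propA}, the coercivity $|\nabla f|^{2}\ge(1-\varepsilon)f-A$, and a weighted maximum principle with cutoffs), but the specific mechanism you outline for Step~1 has a genuine gap.

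The test function $|Ric|-\sigma f$ will not produce a usable differential inequality. From the Bochner formula and Proposition~\ref{propA} one obtains, after absorbing the $|\nabla Ric|/|\nabla f|$ cross-term by Young's inequality,
\[
\Delta_{f}|Ric|^{2}\ \ge\ |\nabla Ric|^{2}+2|Ric|^{2}-c|Ric|^{3}-\frac{c\,|Ric|^{4}}{|\nabla f|^{2}},
\]
so that $\Delta_{f}|Ric|\ge |Ric|-c|Ric|^{2}-c|Ric|^{3}/|\nabla f|^{2}$. At a point where $|Ric|>\sigma f$ but $|Ric|$ is large, the cubic term $-c|Ric|^{2}$ dominates the linear term $+|Ric|$, and since $\Delta_{f}f=2-R$ is bounded below only by $2-A-\varepsilon f$, there is no way to force $\Delta_{f}(|Ric|-\sigma f)\ge 0$. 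The equation is simply not coercive in $|Ric|$ alone.

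The paper's remedy, following \cite{MW}, is to introduce a \emph{denominator}: one works with $u=|Ric|^{2}/R^{a}$ (with $a=\tfrac12$). The point is that $\Delta_{f}R^{-a}$ contributes a term $+2a\,|Ric|^{2}/R^{a+1}$ via Lemma~\ref{lem1}(3), which after multiplication by $|Ric|^{2}$ becomes $+2a\,|Ric|^{4}/R^{a+1}=2a\,u^{2}R^{a-1}$. This manufactured \emph{quadratic}-in-$u$ good term is what absorbs the cubic bad term and yields $\Delta_{f}u\ge \tfrac12 u^{2}R^{-1/2}-cu^{3/2}R^{1/4}-u$, to which the cutoff maximum principle applies directly, giving $u\le cR^{3/2}$ and hence $|Ric|^{2}\le cR^{2}\le c(A+\varepsilon f)^{2}$.

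For Step~2 your bootstrapping through a separate bound on $|\nabla Ric|$ is more circuitous than necessary and, as you note, flirts with circularity. The paper avoids this entirely by considering the \emph{combination} $\Phi=|Rm|+|Ric|^{2}$: writing $-c|Rm|^{2}=|Rm|^{2}-(c+1)|Rm|^{2}$ and bounding $(c+1)|Rm|^{2}$ via the square of Proposition~\ref{propA}, the $2|\nabla Ric|^{2}$ from the Bochner formula for $|Ric|^{2}$ absorbs the resulting $|\nabla Ric|^{2}/|\nabla f|^{2}$ outside a compact set, leaving $\Delta_{f}\Phi\ge \tfrac12\Phi^{2}-\beta(f)$ with $\beta$ depending only on the already-established Ricci bound. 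One more cutoff maximum principle then closes the estimate with no appeal to Shi-type bounds or iteration.
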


\begin{proof} Throughout the proof, for simplicity, we shall use $c$ or $C$ to denote universal constants which may vary from line to line.  

We start by proving the estimate on the Ricci tensor $Ric$. In the first part of the proof, we shall follow the arguments by Munteanu-Wang \cite{MW}. To begin with, notice that 
\begin{eqnarray}
\label{eq34a}
\Delta_{f}|Ric|^{2} &\geq & 2|\nabla Ric|^{2} -c|Rm||Ric|^{2}\nonumber\\&\geq & 2|\nabla Ric|^{2}-c|Ric|^{3}-c\frac{|\nabla Ric|}{|\nabla f|}|Ric|^{2},
\end{eqnarray} where we have used Lemma \ref{lem1} and Proposition \ref{propA}.

On the other hand, for any $0<a<1,$ one easily verifies from Lemma \ref{lem1} (3) that
\begin{eqnarray}
\Delta_{f}\Big(\frac{1}{R^{a}}\Big)&=&-aR^{-a-1}\Delta_{f}R+a(a+1)\frac{|\nabla R|^{2}}{R^{a+2}}\nonumber\\&=& -\frac{a}{R^{a}}+2a\frac{|Ric|^{2}}{R^{a+1}}+a(a+1)\frac{|\nabla R|^{2}}{R^{a+2}}.
\end{eqnarray} 
This jointly with (\ref{eq34a}) yields
\begin{eqnarray}
\label{eq85a}
\Delta_{f}\Big(\frac{|Ric|^{2}}{R^{a}}\Big) &=& \frac{\Delta_{f}|Ric|^{2}}{R^{a}}+|Ric|^{2}\Delta_{f}\Big(\frac{1}{R^{a}}\Big) + 2\Big\langle \nabla |Ric|^{2},\nabla \Big(\frac{1}{R^{a}}\Big)\Big\rangle\nonumber\\
&\geq &\frac{1}{R^{a}}\left[2|\nabla Ric|^{2}-c|Ric|^{3}-c\frac{|\nabla Ric|}{|\nabla f|}|Ric|^{2}\right] \nonumber\\&&+|Ric|^{2}\left[-\frac{a}{R^{a}}+2a\frac{|Ric|^{2}}{R^{a+1}}+a(a+1)\frac{|\nabla R|^{2}}{R^{a+2}}\right]\nonumber\\&&+2\Big\langle \nabla |Ric|^{2},\nabla \Big(\frac{1}{R^{a}}\Big)\Big\rangle.
\end{eqnarray} 

By the Cauchy-Schwarz and Kato's inequalities, we have 
\begin{eqnarray*}
2\Big\langle \nabla |Ric|^{2},\nabla \Big(\frac{1}{R^{a}}\Big)\Big\rangle &\geq & -a(a+1)\frac{|Ric|^{2}|\nabla R|^{2}}{R^{a+2}}-\frac{4a}{a+1}\frac{|\nabla Ric|^{2}}{R^{a}}.
\end{eqnarray*} 
Therefore, returning to (\ref{eq85a}), we achieve
\begin{eqnarray}
\Delta_{f}\Big(\frac{|Ric|^{2}}{R^{a}}\Big) &\geq &\frac{2(1-a)}{(1+a)}\frac{|\nabla Ric|^{2}}{R^{a}}-c\frac{|\nabla Ric|}{|\nabla f|}\frac{|Ric|^{2}}{R^{a}}-c\frac{|Ric|^{3}}{R^{a}}\nonumber\\&&-a\frac{|Ric|^{2}}{R^{a}}+2a\frac{|Ric|^{4}}{R^{a+1}}.
\end{eqnarray} 
Taking into account that 
\begin{eqnarray}
\frac{2(1-a)}{(1+a)}\frac{|\nabla Ric|^{2}}{R^{a}}-c\frac{|\nabla Ric|}{|\nabla f|}\frac{|Ric|^{2}}{R^{a}}\geq -\frac{(1+a)}{8(1-a)}\frac{c^{2}}{|\nabla f|^{2}}\frac{|Ric|^{4}}{R^{a}},\nonumber
\end{eqnarray} 
we then obtain
\begin{eqnarray}
\label{eq5tg}
\Delta_{f}\Big(\frac{|Ric|^{2}}{R^{a}}\Big) &\geq &\Big(2a-\frac{c}{(1-a)}\frac{R}{|\nabla f|^{2}}\Big)\frac{|Ric|^{4}}{R^{a+1}}-c\frac{|Ric|^{3}}{R^{a}}-a\frac{|Ric|^{2}}{R^{a}}.
\end{eqnarray}

At the same time, we already know from Lemma \ref{lem1} that $R+|\nabla f|^{2}=f$ and hence, our assumption implies $$|\nabla f|^{2}\geq (1-\varepsilon)f-A.$$ From this, it follows that 
$$ \frac{R}{|\nabla f|^2}\leq \frac{R}{(1-\varepsilon)f-A} \leq \frac{\varepsilon f+A}{(1-\varepsilon)f-A}.$$ Plugging this into (\ref{eq5tg}), one obtains that

\begin{equation}
\Delta_{f}\Big(\frac{|Ric|^{2}}{R^{a}}\Big) \geq \left(2a-\frac{c}{(1-a)}\frac{(\varepsilon f+A)}{[(1-\varepsilon)f-A]}\right)\frac{|Ric|^{4}}{R^{a+1}}-c\frac{|Ric|^{3}}{R^{a}}-a\frac{|Ric|^{2}}{R^{a}}.
\end{equation}   
Now, we need to analyze the term $$\Gamma=2a-\frac{c}{(1-a)}\frac{(\varepsilon f+A)}{[(1-\varepsilon)f-A]}$$ in the above expression. Indeed, considering $a=\frac{1}{2},$ one sees that $$\Gamma=1-\frac{2c(\varepsilon f +A)}{[(1-\varepsilon)f-A]}.$$ Observe that $$\frac{2c(\varepsilon f +A)}{[(1-\varepsilon)f-A]}\leq \frac{1}{2}\,\,\,\,\,\hbox{whenever}\,\,\,\,\,A\leq \frac{(1-\varepsilon-4c\varepsilon)}{(1+4c)}f.$$ Notice that we can assume our $\varepsilon$ has been chosen in such a way that $\varepsilon < 1/(1+4c)$. 
Therefore, the asymptotic behaviour of $f$ in (\ref{eqfbeh}) guarantees that there exists $r_{0}>0,$ depending only on $A$ and $\varepsilon,$ so that $\Gamma\geq \frac{1}{2}$ and $(1-\varepsilon)f-A\ge 1$ on $M\setminus D(r_{0}).$ Here, $D(r_{0})=\{f \leq r_0\}$.
In view of this, it then follows that

 \begin{eqnarray}
 \label{equ1}
\Delta_{f}u \geq  \frac 12 u^{2}R^{-\frac{1}{2}}-cu^{\frac{3}{2}}R^{\frac{1}{4}}-u
\end{eqnarray} on $M\setminus D(r_{0}),$ where $u:=\frac{|Ric|^{2}}{\sqrt {R}}.$

In the sequel, let $\eta:[0,+\infty)\to [0,\,1]$ be a smooth function such that 
\begin{equation}
\label{eta} {\eta (t) =\left\{
       \begin{array}{lll}
  0, \ \ \quad  t\in [0,\frac{1}{2}],\\[2mm]
    1, \ \ \quad  t\in [1,2],\\[2mm]
    0, \ \ \quad  t\in [3,+\infty).
       \end{array}
    \right.}
    \end{equation} 
    Moreover, we consider $\varphi(x)=\eta \Big(\frac{f(x)}{\rho^{2}}\Big),$ where $\rho> 2r_{0}$ is a constant.

 \begin{remark}  
 \label{remarkZ}
Notice, in particular, that if $\frac{f(x)}{\rho^{2}}\in [1,2],$ then $\varphi(x)=1.$ Furthermore, it is easy to see that if $\frac{f(x)}{\rho^{2}}> 3,$ then $\varphi(x)=0.$ So, one needs only consider the case $\frac{f(x)}{\rho^{2}}\leq 3.$ Similarly, we deduce that $\varphi(x)=0$ for all $x$ such that $\frac{f(x)}{\rho^{2}}<1/2.$ Hence, it suffices to analyze those points $x\in M$ such that $$\frac{\rho^{2}}{2}\leq f(x)\leq 3\rho^{2}.$$
\end{remark}

Proceeding, it follows that 
\begin{equation}
\label{eqq14}
|\nabla \varphi|\leq |\eta'|\frac{|\nabla f|}{\rho^{2}}\leq |\eta'|\frac{\sqrt{f}}{\rho^{2}}\leq |\eta'| \frac{\sqrt{3}}{\rho}.
\end{equation} Notice also that 

\begin{equation}
\label{eqq142}
\Delta_{f}\varphi=\eta''\frac{|\nabla f|^{2}}{\rho^{4}}+\eta'\frac{1}{\rho^{2}}\Delta_{f}f=\eta''\frac{|\nabla f|^{2}}{\rho^{4}}+\eta'\frac{1}{\rho^{2}}(2-f)\leq C,
\end{equation} for a universal constant $C>0.$ Here, we have used that $\Delta_{f}f=2-f,$ which is a direct consequence of equations (1) and (4) of Lemma \ref{lem1}.

From now on, we set $G=\varphi^{2}u=\varphi^{2}\frac{|Ric|^{2}}{\sqrt {R}}$. Then, one obtains that
    
    $$\nabla G=\nabla \Big(\varphi^{2}u\Big)=(\nabla u)\varphi^{2}+u\nabla \varphi^{2}$$ and $$\Delta_{f}\varphi^{2}=2|\nabla \varphi|^{2}+2\varphi\Delta_{f}\varphi.$$ These facts combined with (\ref{equ1}) allow us to infer
    \begin{eqnarray}
    \label{equ23}
   \varphi^{2}\Delta_{f}G &=& \varphi^{2} \Delta_{f} (u\varphi^{2})\nonumber\\&\geq &\varphi^{4}\Big[\frac{1}{2}u^{2}R^{-\frac{1}{2}}-cu^{\frac{3}{2}}R^{\frac{1}{4}}-cu\Big]+ \varphi^{2}u\Big[2|\nabla \varphi|^{2}+2\varphi\Delta_{f}\varphi\Big]\nonumber\\&&+2\langle \nabla G,\nabla \varphi^{2}\rangle -2u\langle \nabla \varphi^{2},\nabla \varphi^{2}\rangle\nonumber\\&\geq & \frac{1}{2}G^{2}R^{-\frac{1}{2}}-cG^{\frac{3}{2}}R^{\frac{1}{4}}-cG+2\langle \nabla G, \nabla \varphi^{2}\rangle,
\end{eqnarray} where we also used that $0\leq \varphi\leq 1.$ Besides, assuming that $G$ achieves its maximum at some point $q\in D(3\rho^{2})=\{x\in M\!: f(x)\leq 3\rho^{2}\}$, we have 

$$0\geq \frac{1}{2}G^{2}(q)R^{-\frac{1}{2}}(q)-cG^{\frac{3}{2}}(q)R^{\frac{1}{4}}(q)-cG(q).$$ Rearranging terms and considering $v=G^{\frac{1}{2}},$ we get 

$$0\geq \frac{1}{2}R^{-\frac{1}{2}}v^{2}-cR^{\frac{1}{4}}v-c,$$ at point $q.$ Thus, by computing the discriminant, one sees that $v(q)\leq cR^{\frac{3}{4}}(q).$ Consequently,  $$G(q)\leq c R^{\frac{3}{2}}(q).$$ Now, on $D(3\rho^{2}),$ we have $$G(x)\leq G(q)\leq c R^{\frac{3}{2}}(q)\leq c (\varepsilon f(q)+A)^{\frac{3}{2}}.$$ Therefore, on $D(2\rho^{2})\setminus D(\rho^{2}),$ we deduce that

\begin{eqnarray}
|Ric|^{2}(x)&=&G(x)R^{\frac{1}{2}}(x)\leq G(q)R^{\frac{1}{2}}(x)\nonumber\\ &\leq & c  (\varepsilon f(q)+A)^{\frac{3}{2}}  (\varepsilon f(x)+A)^{\frac{1}{2}} \leq C'  (\varepsilon f(x)+A)^{2},
\end{eqnarray} where we have used the fact that $G=0$ on $D({\rho^{2}}/2)$ and hence $q\in D(3\rho^{2})\setminus D(\rho^{2}/2)$, i.e., $f(q)\geq \frac{\rho^{2}}{2}.$ 

Finally, since $\rho$ can be arbitrarily large, we conclude that $$|Ric|\leq c_{0}+ (c_{1}\varepsilon)\, f$$  on $M\setminus D(r_{0})$.  We can finish the proof of the first assertion in the theorem by choosing $c_0 $ large enough so that estimate (1) in Theorem 4 holds on all $M$.

\vspace{0.40cm}

Next, we deal with the second assertion in the theorem, i.e., the estimate on the Riemann curvature tensor. Again, we adapt the argument by Munteanu-Wang \cite{MW}. Initially, by assertion (6) of Lemma \ref{lem1} and Kato's inequality, one observes that

\begin{equation}
\label{eqe4a}
\Delta_{f}|Rm|\geq -c |Rm|^{2}.
\end{equation} 

On the other hand, it follows from Proposition \ref{propA} that

\begin{eqnarray*}
\Delta_{f}|Ric|^{2}&\geq & 2|\nabla Ric|^{2}+2|Ric|^{2}-4|Rm||Ric|^{2}\nonumber\\
&\geq & 2|\nabla Ric|^{2}+2|Ric|^{2}-C|Ric|^{3}-C\frac{|\nabla Ric|}{|\nabla f|}|Ric|^{2}.
\end{eqnarray*}
This jointly with (\ref{eqe4a}) yields

\begin{eqnarray}
\label{ineqA12}
\Delta_{f}\Big(|Rm|+|Ric|^{2}\Big) &\geq & |Rm|^{2}-(c+1)|Rm|^{2}+2|\nabla Ric|^{2}+2|Ric|^{2}\nonumber\\ && -C|Ric|^{3}-C\frac{|\nabla Ric|}{|\nabla f|}|Ric|^{2}\nonumber\\
& \geq & |Rm|^{2}-c|Ric|^{2}-c\frac{|\nabla Ric|^{2}}{|\nabla f|^{2}}\nonumber\\&& +2|\nabla Ric|^{2}-C|Ric|^{3}-C\frac{|\nabla Ric|}{|\nabla f|}|Ric|^{2},
\end{eqnarray} 
where we have used again Proposition \ref{propA}.

Clearly,

\begin{equation*}
C\frac{|\nabla Ric|}{|\nabla f|}|Ric|^{2}\leq \frac{|\nabla Ric|^{2}}{|\nabla f|^{2}}+\frac {C^2}4|Ric|^{4}.
\end{equation*} Plugging this inequality into (\ref{ineqA12}) and using that the assumption $R\leq A+\varepsilon f$ implies $|\nabla f|^{2}\geq (1-\varepsilon)f-A,$ there exists a $r_{0}>0$ (sufficiently large) so that 

\begin{eqnarray}
\Delta_{f}\Big(|Rm|+|Ric|^{2}\Big) &\geq & |Rm|^{2}-c|Ric|^{3}-c|Ric|^{4}
\end{eqnarray} on $M\setminus D(r_{0}).$
It then follows that 
\begin{eqnarray*}
\Delta_{f}\Big(|Rm|+|Ric|^{2}\Big) &\geq & \frac{1}{2}\Big(|Rm|+|Ric|^{2}\Big)^{2}-c|Ric|^{4}-c|Ric|^{3}.
\end{eqnarray*} 
In addition, by using the estimate $|Ric| \leq c_{0}+ (c_{1}\varepsilon)\, f$ established in the first part of the proof, we see that

\begin{eqnarray}
\label{p1q32}
\Delta_{f}\left(|Rm|+|Ric|^{2}\right)\geq \frac{1}{2}\left(|Rm|+|Ric|^{2}\right)^{2}- \beta(f),
\end{eqnarray}  on $M\setminus D(r_{0}),$ where $\beta(f)=c\Big[(c_{0}+ (c_{1}\varepsilon)\, f)^{4}+(c_{0}+ (c_{1}\varepsilon)\, f)^{3}\Big].$

Now, as was accomplished in the proof of the first part, we consider the same functions $\eta$ and $\varphi (x)=\eta \Big(\frac{f(x)}{\rho^{2}}\Big)\in C_{0}^{\infty} (D(3\rho^2)),$ 
 for some arbitrary $\rho>2r_{0}$ sufficiently large.

Then one easily verifies that,  for $Q=\varphi^{2}\Phi$ with $\Phi=|Rm|+|Ric|^{2},$
\begin{eqnarray*}
\Delta_{f}Q&= &\Delta_{f}(\varphi^{2}\Phi)\nonumber\\&=& \Phi(\Delta_{f}\varphi^{2})+\varphi^{2}(\Delta_{f}\Phi)+2\langle \nabla \varphi^{2},\nabla \Phi\rangle,\nonumber
\end{eqnarray*} and taking into account that 
\begin{eqnarray*}
\langle \nabla \varphi^{2},\,\nabla Q\rangle &=& \Phi\langle \nabla \varphi^{2}, \nabla \varphi^{2}\rangle +\varphi^{2}\langle\nabla \varphi^{2},\nabla \Phi\rangle,
\end{eqnarray*}
 we therefore obtain
\begin{eqnarray}
\label{pkl11}
\varphi^{2}\Delta_{f}Q&= & \Big(\varphi^{2}\Phi\Big)\Delta_{f}\varphi^{2}+\varphi^{4}\Delta_{f}\Phi+2\varphi^{2}\langle \nabla \varphi^{2},\nabla \Phi\rangle\nonumber\\ &=& \Big(\varphi^{2}\Phi\Big)\Delta_{f}\varphi^{2}+\varphi^{4}\Delta_{f}\Phi +2\langle \nabla \varphi^{2},\nabla Q\rangle -2\Phi\langle \nabla \varphi^{2},\nabla \varphi^{2}\rangle.
\end{eqnarray} 
Thereby, by using $\Delta_{f}\varphi^{2}=2|\nabla \varphi|^{2}+2\varphi \Delta_{f}\varphi$ and (\ref{p1q32}), we infer

\begin{eqnarray}
\varphi^{2}\Delta_{f}Q &\geq & Q\left(2|\nabla \varphi|^{2}+2\varphi \Delta_{f}\varphi\right)+\varphi^{4}\left(\frac{1}{2}\Phi^{2}-\beta\right)\nonumber\\&&  +2\langle \nabla \varphi^{2},\nabla Q\rangle -8\Phi\varphi^{2}|\nabla \varphi|^{2} \nonumber\\ &\geq & \frac{1}{2}Q^{2}-cQ+2\langle \nabla \varphi^{2},\nabla Q\rangle -\beta.
\end{eqnarray} 
Therefore, by the standard maximum principle argument, we conclude that
$$ Q\leq c+ (c_{2}\varepsilon)\, f^{2}$$ on $D(3\rho^{2}).$ Since $Q=\varphi^{2}\Phi$ and $\rho>0$ is arbitrarily large, it holds that $$|Rm| \leq |Rm|+|Ric|^{2}\leq c+ (c_{2}\varepsilon)\, f^{2},$$ on $M\setminus D(r_{0})$.  Clearly, the constant $c$ can be chosen large enough so that (2) in Theorem 4 holds on all $M$. This finishes the proof of the theorem.
\end{proof}

\begin{bibdiv}
\begin{biblist}

\bib{caoALM11}{article}{
   author={Cao, Huai-Dong},
   title={Recent progress on Ricci solitons},
   conference={
      title={Recent advances in geometric analysis},
   },
   book={
      series={Adv. Lect. Math. (ALM)},
      volume={11},
      publisher={Int. Press, Somerville, MA},
   },
   date={2010},
   pages={1--38},
   review={\MR{2648937}},
}

\bib{CaoA}{article}{
   author={Cao, Huai-Dong},
   author={Chen, Bing-Long},
   author={Zhu, Xi-Ping},
   title={Recent developments on Hamilton's Ricci flow},
   conference={
      title={Surveys in differential geometry. Vol. XII. Geometric flows},
   },
   book={
      series={Surv. Differ. Geom.},
      volume={12},
      publisher={Int. Press, Somerville, MA},
   },
   date={2008},
   pages={47--112},
   review={\MR{2488948}},
}

\bib{CaoChen}{article}{
   author={Cao, Huai-Dong},
   author={Chen, Qiang},
   title={On Bach-flat gradient shrinking Ricci solitons},
   journal={Duke Math. J.},
   volume={162},
   date={2013},
   number={6},
   pages={1149--1169},
   issn={0012-7094},
   review={\MR{3053567}},
}

\bib{CC}{article}{
   author={Cao, Huai-Dong},
   author={Cui, Xin},
   title={Curvature estimates for four-dimensional gradient steady Ricci
   solitons},
   journal={J. Geom. Anal.},
   volume={30},
   date={2020},
   number={1},
   pages={511--525},
   issn={1050-6926},
   review={\MR{4058524}},
}

\bib{CZ}{article}{
   author={Cao, Huai-Dong},
   author={Zhou, Detang},
   title={On complete gradient shrinking Ricci solitons},
   journal={J. Differential Geom.},
   volume={85},
   date={2010},
   number={2},
   pages={175--185},
   issn={0022-040X},
   review={\MR{2732975}},
}

\bib{CH}{article}{
   author={Cao, Xiaodong},
   author={Tran, Hung},
   title={The Weyl tensor of gradient Ricci solitons},
   journal={Geom. Topol.},
   volume={20},
   date={2016},
   number={1},
   pages={389--436},
   issn={1465-3060},
   review={\MR{3470717}},
}

\bib{CWZ}{article}{
   author={Cao, Xiaodong},
   author={Wang, Biao},
   author={Zhang, Zhou},
   title={On locally conformally flat gradient shrinking Ricci solitons},
   journal={Commun. Contemp. Math.},
   volume={13},
   date={2011},
   number={2},
   pages={269--282},
   issn={0219-1997},
   review={\MR{2794486}},
}

\bib{Catino}{article}{
   author={Catino, Giovanni},
   title={Complete gradient shrinking Ricci solitons with pinched curvature},
   journal={Math. Ann.},
   volume={355},
   date={2013},
   number={2},
   pages={629--635},
   issn={0025-5831},
   review={\MR{3010141}},
}

\bib{catinoAdv}{article}{
   author={Catino, Giovanni},
   title={Integral pinched shrinking Ricci solitons},
   journal={Adv. Math.},
   volume={303},
   date={2016},
   pages={279--294},
   issn={0001-8708},
   review={\MR{3552526}},
}

\bib{Chan}{article}{
  author={Chan, Pak-Yeung},
   title={Curvature estimates for steady Ricci solitons},
   journal={Trans. Amer. Math. Soc.},
   volume={372},
   date={2019},
   number={12},
   pages={8985--9008},
   issn={0002-9947},
   review={\MR{4029719}},
}

\bib{Chen}{article}{
   author={Chen, Bing-Long},
   title={Strong uniqueness of the Ricci flow},
   journal={J. Differential Geom.},
   volume={82},
   date={2009},
   number={2},
   pages={363--382},
   issn={0022-040X},
   review={\MR{2520796}},
}

\bib{CW}{article}{
   author={Chen, Xiuxiong},
   author={Wang, Yuanqi},
   title={On four-dimensional anti-self-dual gradient Ricci solitons},
   journal={J. Geom. Anal.},
   volume={25},
   date={2015},
   number={2},
   pages={1335--1343},
   issn={1050-6926},
   review={\MR{3319974}},
}

\bib{CFSZ}{article}{author={Chow, Bennett}, author={Freedman, Michael}, author={Shin, Henry}, author={Zhang, Yongjia}, title={Curvature growth of some $4$-dimensional gradient Ricci soliton sigularity models}, journal={Advances in Mathematics}, date={2020}, number={372}, pages={107303}}

\bib{Chow}{article}{author={Chow, Bennet}, author={Lu, Peng}, author={Yang, Bo}, title={Lower bounds for the scalar curvatures of noncompact gradient Ricci solitons}, journal={C. R. Math. Acad. Sci. Paris}, volume={349}, date={2011}, number={23-24}, review={\MR{2861997}}, }

\bib{derd1}{article}{
   author={Derdzi\'{n}ski, Andrzej},
   title={Self-dual K\"{a}hler manifolds and Einstein manifolds of dimension
   four},
   journal={Compositio Math.},
   volume={49},
   date={1983},
   number={3},
   pages={405--433},
   issn={0010-437X},
   review={\MR{707181}},
}

\bib{ELM}{article}{
   author={Eminenti, Manolo},
   author={La Nave, Gabriele},
   author={Mantegazza, Carlo},
   title={Ricci solitons: the equation point of view},
   journal={Manuscripta Math.},
   volume={127},
   date={2008},
   number={3},
   pages={345--367},
   issn={0025-2611},
   review={\MR{2448435}},
}

\bib{Topping}{article}{
   author={Enders, Joerg},
   author={M\"{u}ller, Reto},
   author={Topping, Peter M.},
   title={On type-I singularities in Ricci flow},
   journal={Comm. Anal. Geom.},
   volume={19},
   date={2011},
   number={5},
   pages={905--922},
   issn={1019-8385},
   review={\MR{2886712}},
}

\bib{FIK}{article}{
   author={Feldman, Mikhail},
   author={Ilmanen, Tom},
   author={Knopf, Dan},
   title={Rotationally symmetric shrinking and expanding gradient
   K\"{a}hler-Ricci solitons},
   journal={J. Differential Geom.},
   volume={65},
   date={2003},
   number={2},
   pages={169--209},
   issn={0022-040X},
   review={\MR{2058261}},
}

\bib{FLGR}{article}{
   author={Fern\'{a}ndez-L\'{o}pez, Manuel},
   author={Garc\'{\i}a-R\'{\i}o, Eduardo},
   title={Rigidity of shrinking Ricci solitons},
   journal={Math. Z.},
   volume={269},
   date={2011},
   number={1-2},
   pages={461--466},
   issn={0025-5874},
   review={\MR{2836079}},
}

\bib{Hamilton1}{article}{author={Hamilton, Richard S.},
   title={Three-manifolds with positive Ricci curvature}, journal={J. Differential Geom.},, volume={17}, date={1982}, number={2}, pages={255--306},   review={\MR{664497}},
   }

\bib{Hamilton2}{article}{
   author={Hamilton, Richard S.},
   title={The formation of singularities in the Ricci flow},
   conference={
      title={Surveys in differential geometry, Vol. II},
      address={Cambridge, MA},
      date={1993},
   },
   book={
      publisher={Int. Press, Cambridge, MA},
   },
   date={1995},
   pages={7--136},
   review={\MR{1375255}},
}

\bib{Ivey}{article}{
   author={Ivey, Thomas},
   title={New examples of complete Ricci solitons},
   journal={Proc. Amer. Math. Soc.},
   volume={122},
   date={1994},
   number={1},
   pages={241--245},
   issn={0002-9939},
   review={\MR{1207538}},
}

\bib{KW}{article}{
   author={Kotschwar, Brett},
   author={Wang, Lu},
   title={Rigidity of asymptotically conical shrinking gradient Ricci solitons},
   journal={J. Differential Geom.},
   volume={100},
   date={2015},
   number={1},
   pages={55--108},
   review={\MR{3326574}},
}

\bib{MS}{article}{
   author={Munteanu, Ovidiu},
   author={Sesum, Natasa},
   title={On gradient Ricci solitons},
   journal={J. Geom. Anal.},
   volume={23},
   date={2013},
   number={2},
   pages={539--561},
   issn={1050-6926},
   review={\MR{3023848}},
}

\bib{MW}{article}{
   author={Munteanu, Ovidiu},
   author={Wang, Jiaping},
   title={Geometry of shrinking Ricci solitons},
   journal={Compos. Math.},
   volume={151},
   date={2015},
  number={12},
   pages={2273--2300},
   issn={0010-437X},
   review={\MR{3433887}},
}

\bib{MW2}{article}{
   author={Munteanu, Ovidiu},
   author={Wang, Jiaping},
   title={Positively curved shrinking Ricci solitons are compact},
   journal={J. Differential Geom.},
   volume={106},
   date={2017},
   number={3},
   pages={499--505},
   issn={0022-040X},
   review={\MR{3680555}},
}

\bib{Naber}{article}{
   author={Naber, Aaron},
   title={Noncompact shrinking four solitons with nonnegative curvature},
   journal={J. Reine Angew. Math.},
   volume={645},
   date={2010},
   pages={125--153},
   issn={0075-4102},
   review={\MR{2673425}},
}

\bib{Ni}{article}{
   author={Ni, Lei},
   author={Wallach, Nolan},
   title={On a classification of gradient shrinking solitons},
   journal={Math. Res. Lett.},
   volume={15},
   date={2008},
   number={5},
   pages={941--955},
   issn={1073-2780},
   review={\MR{2443993}},
}

\bib{Perelman2}{article}{author={Perelman, Grisha}, title={Ricci flow with surgery on three manifolds}, journal={ArXiv:math.DG/0303109}, date={2003},}

\bib{PW2}{article}{
   author={Petersen, Peter},
   author={Wylie, William},
   title={On the classification of gradient Ricci solitons},
   journal={Geom. Topol.},
   volume={14},
   date={2010},
   number={4},
   pages={2277--2300},
   issn={1465-3060},
   review={\MR{2740647}},

}

\bib{Sesum}{article}{
   author={Sesum, Natasa},
   title={Convergence of the Ricci flow toward a soliton},
   journal={Comm. Anal. Geom.},
   volume={14},
   date={2006},
   number={2},
   pages={283--343},
   issn={1019-8385},

}

\bib{Wu1}{article}{
   author={Wu, Peng},
   title={A Weitzenb\"{o}ck formula for canonical metrics on four-manifolds},
   journal={Trans. Amer. Math. Soc.},
   volume={369},
   date={2017},
   number={2},
   pages={1079--1096},
   issn={0002-9947},
   review={\MR{3572265}},
}

\bib{Wu}{article}{
   author={Wu, Jia-Yong},
   author={Wu, Peng},
   author={Wylie, William},
   title={Gradient shrinking Ricci solitons of half harmonic Weyl curvature},
   journal={Calc. Var. Partial Differential Equations},
   volume={57},
   date={2018},
   number={5},
   pages={Paper No. 141, 15},
   issn={0944-2669},
   review={\MR{3849152}},
}

\bib{zhang}{article}{
   author={Zhang, Zhu-Hong},
   title={Gradient shrinking solitons with vanishing Weyl tensor},
   journal={Pacific J. Math.},
   volume={242},
   date={2009},
   number={1},
   pages={189--200},
   issn={0030-8730},
   review={\MR{2525510}},
}

\bib{Zhang2}{article}{
   author={Zhang, Zhu-Hong}, title={A gap theorem of four-dimensional gradient shrinking solitons}, journal={Commun. Anal. Geom.}, volume={28}, date={2020}, number={3}, review={},  pages={729--742}, DOI = {10.4310/CAG.2020.v28.n3.a8}
   }

\end{biblist}
\end{bibdiv}

\end{document}